\documentclass[letterpaper,10pt]{amsart}
\usepackage{thmtools}
\usepackage{amssymb}
\usepackage{xcolor}
\usepackage{hyperref}
\usepackage{dsfont}
\usepackage{enumerate}
\usepackage{mathrsfs}
\usepackage{mathtools}

\usepackage[parfill]{parskip}

\definecolor{carminepink}{rgb}{0.92, 0.3, 0.26}

\hypersetup{backref,colorlinks=true,allcolors=carminepink} 

\declaretheorem[parent=section]{theorem}

\declaretheorem[sibling=theorem]{lemma}
\declaretheorem[sibling=theorem]{corollary}

\declaretheorem[sibling=theorem,style=remark]{remark}

\declaretheorem[sibling=theorem,style=definition]{definition}

\DeclareSymbolFont{bbold}{U}{bbold}{m}{n}
\DeclareSymbolFontAlphabet{\mathbbold}{bbold}

\definecolor{electricultramarine}{rgb}{0.25, 0.0, 1.0}

\providecommand{\norm}[1]{\left\Vert#1\right\Vert}

\newcommand{\cont}[1]{\stackrel{#1}{\frown}}
\newcommand{\conts}[1]{\stackrel{#1}{\frown}}

\newcommand{\R}{\mathbb{R}}


\newtheorem*{theorem*}{Theorem}
\newtheorem*{acknow*}{Acknowledgments}

\begin{document}

\author{Solesne Bourguin}
\address{Boston University, Department of Mathematics and Statistics, 111
  Cummington Mall, Boston, MA 02215, USA}
\email{solesne.bourguin@gmail.com}
\author{Simon Campese}
\address{University of Luxembourg, Mathematics Research Unit, 6, rue Richard
  Coudenhove-Kalergi, 1359 Luxembourg, Luxembourg}
\email{simon.campese@uni.lu}
\title[Free quantitative Fourth Moment Theorems]{Free quantitative Fourth Moment
  Theorems on Wigner space}  
\begin{abstract}
We prove a quantitative Fourth Moment Theorem for Wigner integrals of any order
with symmetric kernels, generalizing an earlier result from Kemp et al. (2012). The proof relies on free stochastic analysis and
uses a new biproduct formula for bi-integrals. A consequence of our main result
is a Nualart-Ortiz-Latorre type characterization of convergence in law to the
semicircular distribution for Wigner integrals. As an application, we provide
Berry-Esseen type bounds in the context of the free Breuer-Major theorem for the
free fractional Brownian motion.
\end{abstract}
\subjclass[2010]{46L54, 68H07, 60H30}
\keywords{Free probability, Wigner integrals, free Malliavin calculus, free stochastic analysis, free quantitative central limit theorems, free Fourth Moment Theorems}
\maketitle

\section{Introduction}
Let $\left( \mathscr{A}, \varphi \right) $ be a tracial $W^*$-probability space,
$S$ be a semicircular random variable and $F = I_n(f)$ be a self-adjoint Wigner
integral (for a simple example, take off-diagonal homogeneous sums of a
semicircular system). Recently, Kemp et al. showed in~\cite{kemp_wigner_2012}
that for a sequence of such Wigner integrals, convergence of the fourth moment
controls convergence in distribution towards the semicircular law. Moreover,
they provided a quantitative bound in terms of the free gradient operator, which
is of the form  (all unexplained notation appearing in this section will be
introduced in the sequel)
\begin{equation}
\label{wignercasebound}
d_{\mathcal{C}_2}\left(F ,S \right) \leq \frac{1}{2} \varphi \otimes \varphi \left( \left | \int \nabla_s\left( N_0^{-1}F\right) \sharp \left( \nabla_s F\right) ^{*}ds -1\otimes 1 \right | \right).
\end{equation}
Here, $d_{\mathcal{C}_2}$ is a distance that metrizes free convergence in
distribution (see Definition \ref{dc2distance}), $\nabla$ denotes the free
gradient operator first introduced by Biane and Speicher in
\cite{biane_stochastic_1998} and $N_{0}^{-1}$ stands for the pseudo-inverse of
the number operator (see Section \ref{prelim}). In the special case of Wigner
integrals of order two, Kemp et al. showed in~\cite{kemp_wigner_2012} that the
gradient expression 
appearing in \eqref{wignercasebound} can further be bounded by the fourth
moment. To be more precise, it holds that
\begin{equation}
\label{I2bound}
\begin{aligned} d_{\mathcal{C}_2}\left(I_{2}(f) ,S \right) & \leq \frac{1}{2} \varphi \otimes \varphi \left( \left | \int \nabla_s\left( N_0^{-1}I_{2}(f)\right) \sharp \left( \nabla_s I_{2}(f)\right) ^{*}ds -1\otimes 1 \right | \right) \\
& \leq\frac{1}{2} \sqrt{\frac{3}{2}}\sqrt{\varphi\left(I_{2}(f)^4 \right)-2 }.\end{aligned}
\end{equation}
A question left open in the aforementioned article is whether a similar fourth
moment bound holds for Wigner integrals of higher orders, as is the case in the
commutative setting (see Nualart and Peccati \cite{nualart_central_2005} and
Nourdin and Peccati\cite{nourdin_steins_2009}). In this paper, we provide a
positive answer to this question by proving fourth moment bounds for Wigner
integrals of any order with symmetric kernels. Our main result can be
paraphrased as follows (see Theorem \ref{maintheorem1} for a precise
statement).
\begin{theorem*}
For a Wigner integral $F$ of order $n$ with normalized symmetric kernel it holds
that 
\begin{equation*}
\varphi \otimes \varphi\left( \left | \int_{\mathbb{R}_{+}}\nabla_s\left(
      N_0^{-1} F\right) \sharp \left(\nabla_s F \right)^{*}  ds- 1\otimes 1
  \right |^2\right) \leq C_n \Big(\varphi\left( F^4\right) -2\Big). 
\end{equation*}
\end{theorem*}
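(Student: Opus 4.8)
The plan is to reduce both sides of the inequality to finite sums of squared $L^2$-norms of the nested contractions $f\cont{p}f$, $1\le p\le n-1$, of the kernel, and then to compare them, invoking on one side the fourth-moment identity of \cite{kemp_wigner_2012}. \textit{Step 1 (reduction to one self-adjoint bi-integral).} Since $F$ lies in the $n$-th Wigner chaos the number operator acts by $N_0F=nF$, so $N_0^{-1}F=\frac1nF$ and $\nabla_s(N_0^{-1}F)=\frac1n\nabla_sF$. Setting
\begin{equation*}
Q:=\int_{\mathbb{R}_+}\nabla_s\bra{N_0^{-1}F}\sharp\bra{\nabla_sF}^*\,ds=\frac1n\int_{\mathbb{R}_+}\nabla_sF\sharp\bra{\nabla_sF}^*\,ds,
\end{equation*}
the relations $(A\sharp B)^*=B^*\sharp A^*$ and $\bra{\nabla_sF}^{**}=\nabla_sF$ show that $Q=Q^*$, so $\abs{Q-1\otimes1}^2=\bra{Q-1\otimes1}^2=Q^2-2Q+1\otimes1$ and $\varphi\otimes\varphi\bra{\abs{Q-1\otimes1}^2}=\varphi\otimes\varphi\bra{Q^2}-2\,\varphi\otimes\varphi(Q)+1$. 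The duality between $\nabla$ and the divergence $\delta$, together with $\delta\nabla=N_0$, yields the free integration-by-parts identity $\varphi\otimes\varphi(Q)=\varphi\bra{FF^*}=\varphi(F^2)=1$ (using self-adjointness of $F$ and the normalization), so the left-hand side equals $\varphi\otimes\varphi(Q^2)-1$; this is nonnegative by Cauchy-Schwarz for the form $(X,Y)\mapsto\varphi\otimes\varphi(X^*Y)$, and it remains to bound it from above.

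\textit{Step 2 (the biproduct expansion).} Plugging $\nabla_sI_n(f)=\sum_{k=0}^{n-1}\bra{I_k\otimes I_{n-1-k}}\bra{f(\cdot,s,\cdot)}$ into $\int_{\mathbb{R}_+}\nabla_sF\sharp(\nabla_sF)^*\,ds$ exhibits this object as a \emph{bi-integral}: a double Wigner integral whose two legs are linked by the integration in $s$. Thus $Q^2$ is a product of two bi-integrals, and the crucial step is a \emph{biproduct formula} for such products --- the new tool announced in the abstract --- rewriting it as a finite sum of bi-integrals whose kernels are iterated nested contractions of $f$ with itself. Applying $\varphi\otimes\varphi$ annihilates every summand except the fully contracted, scalar ones, leaving
\begin{equation*}
\varphi\otimes\varphi\bra{Q^2}-1=\sum_{\pi}c_\pi\,\big\langle g_\pi,g_\pi'\big\rangle_{L^2},
\end{equation*}
a finite sum over admissible contraction patterns $\pi$, with combinatorial coefficients $c_\pi$ depending only on $n$, where each $g_\pi$ and $g_\pi'$ is an iterated nested contraction of $f$ with itself. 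Carrying this out --- developing the calculus of bi-integrals, tracking the non-commutativity of $\sharp$ and the bimodule involution, and determining precisely which patterns survive under $\varphi\otimes\varphi$ --- is the main obstacle; the non-crossing structure of free probability does, however, thin the list of patterns considerably compared with the commutative Nualart-Peccati computation.

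\textit{Step 3 (control by the fourth moment).} Each $\big\langle g_\pi,g_\pi'\big\rangle_{L^2}$ is, up to the reversal and conjugation symmetries of $\cont{p}$, bounded by $\norm{g_\pi}_{L^2}\norm{g_\pi'}_{L^2}$ via Cauchy-Schwarz; combining the elementary estimate $\norm{f\cont{r}h}_{L^2}\le\norm{f}_{L^2}\norm{h}_{L^2}$, the identity $\norm{f\cont{p}f}_{L^2}=\norm{f\cont{n-p}f}_{L^2}$, and the sharper bound controlling a mixed contraction norm by a geometric mean of the basic norms $\norm{f\cont{p}f}_{L^2}$, one gets that each iterated contraction norm $\norm{g_\pi}_{L^2}$ is at most a constant depending only on $n$ (recall $\norm{f}_{L^2}=1$) times $\max_{1\le p\le n-1}\norm{f\cont{p}f}_{L^2}$. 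Hence $\varphi\otimes\varphi(Q^2)-1\le C_n'\sum_{p=1}^{n-1}\norm{f\cont{p}f}_{L^2}^2$ for some constant $C_n'$. Finally, the fourth-moment identity of \cite{kemp_wigner_2012} expresses $\varphi(F^4)-2=\varphi(F^4)-2\varphi(F^2)^2$ as a nonnegative linear combination of the contraction norms $\norm{f\cont{p}f}_{L^2}^2$, $1\le p\le n-1$ (together with their mirror counterparts), whence $\sum_{p=1}^{n-1}\norm{f\cont{p}f}_{L^2}^2\le\varphi(F^4)-2$. Chaining the last two estimates produces $\varphi\otimes\varphi\bra{\abs{Q-1\otimes1}^2}\le C_n\bra{\varphi(F^4)-2}$ with an explicit $C_n$, as claimed.
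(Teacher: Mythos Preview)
Your Step~1 is fine, but Steps~2--3 take an unnecessary detour and obscure the point at which full symmetry of $f$ enters. The paper does not expand $Q^2$. It applies the biproduct formula \emph{once}, to the integrand $\nabla_sF\sharp(\nabla_sF)^*$, obtaining
\[
\int_{\mathbb{R}_+}\nabla_sF\sharp(\nabla_sF)^*\,ds=\sum_{k,q,p,r}I_{k+q-2p-2}\otimes I_{2n-k-q-2r}\Big(\int_{\mathbb{R}_+}f_s^{(k)}\conts{p,r}f_s^{(q)}\,ds\Big).
\]
The decisive simplification---and the only place full symmetry is used---is that each kernel collapses to a \emph{single} basic contraction: $\int f_s^{(k)}\conts{p,r}f_s^{(q)}\,ds=f\cont{p+r+1}f$. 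After regrouping, $Q-1\otimes1=\frac1n\sum_{u=1}^{n-1}\sum_{v}c_{u,v}\,I_v\otimes I_{2(n-u)-v}(f\cont{u}f)$ is a sum of pairwise \emph{orthogonal} bi-integrals, and the Wigner bisometry gives the left-hand side \emph{exactly} as $\frac{1}{n^2}\sum_{u=1}^{n-1}\big(\sum_v c_{u,v}^2\big)\norm{f\cont{u}f}^2$. One then just pulls out $\max_u\sum_v c_{u,v}^2$ and uses the identity $\sum_{u=1}^{n-1}\norm{f\cont{u}f}^2=\varphi(F^4)-2$. No iterated contractions occur, no Cauchy--Schwarz is needed, and $C_n$ comes out explicitly as the maximum of a degree-five polynomial in $u$.

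Your route---treating $Q$ as a single bi-integral, applying a product formula to $Q^2$, producing iterated contractions, and then controlling them via Cauchy--Schwarz and mixed-contraction estimates---would at best yield a larger, non-explicit constant. More seriously, your sketch never isolates the step that requires \emph{full} (rather than mirror) symmetry. Since the bound is false for merely mirror-symmetric kernels (the paper exhibits a third-chaos counterexample), any correct argument must use full symmetry in an essential way; in the paper it is exactly the collapse $\int f_s^{(k)}\conts{p,r}f_s^{(q)}\,ds=f\cont{p+r+1}f$, which fails without it.
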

The constant $C_n$ grows asymptotically linearly with $n$ and is a local maximum
of a certain polynomial (see Theorem~\ref{maintheorem1} for full
details). Combined with \eqref{wignercasebound}, our result quantifies the free
Fourth Moment Theorem \cite[Theorem 1.3]{kemp_wigner_2012} for the case of
Wigner integrals with symmetric kernels. In particular, $\sqrt{C_2} =
\frac{1}{2} \sqrt{\frac{3}{2}}$, so that, by Cauchy-Schwarz, the
bound~\eqref{I2bound} is included as a special case.

It is well-known that in order to ensure that Wigner integrals are self-adjoint
(and thus free random variables), the symmetry of the kernel can be relaxed to
mirror-symmetry (see Definition~\ref{mirrorsymdef}). As our main bound is
stated for symmetric kernels, the natural question arises whether or not it can
be generalized to cover the mirror-symmetric case as well. The answer to this
question is negative, as is shown by the counterexample in Remark
\ref{bigremark}.  

In the proof of our main result we use a new biproduct formula (see Theorem
\ref{productformulabi}) for Wigner bi-integrals (see Subsection
\ref{subsectionbiintegrals}) which generalizes the product formula proved by
Biane and Speicher in \cite{biane_stochastic_1998} for usual Wigner
integrals. In this biproduct formula, the nested contractions become what we
call bicontractions. As product formulae play a central role in free (and also
classical) stochastic analysis, this might be of independent interest. Other
ingredients include the free Malliavin calculus introduced by
Biane and Speicher in~\cite{biane_stochastic_1998} as well as a fine
combinatorial analysis. A direct consequence of our bound is a
Nualart-Ortiz-Latorre type equivalent condition for convergence towards the
semicircular law, which reads as follows (see Theorem
\ref{nualartortizlatorrefree} for a precise statement).
\begin{theorem*}
A sequence $F_k$ of Wigner integrals of order $n$ with normalized symmetric
kernels converges in law to the standard semicircular distribution if, and only
if, 
\begin{equation*}
\int_{\mathbb{R}_{+}}\left( \nabla_s F_k\right)  \sharp \left(\nabla_s F_k
\right)^{*}ds \rightarrow n \cdot 1 \otimes 1\ \text{ in }\
L^2\left(\mathscr{A}\otimes\mathscr{A}, \varphi \otimes \varphi \right). 
\end{equation*}
\end{theorem*}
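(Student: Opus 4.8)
The plan is to deduce the equivalence directly from the two estimates already at our disposal --- the quantitative bound \eqref{wignercasebound} of Kemp et al. and the Main Theorem stated above --- once we record that the number operator acts as multiplication by $n$ on the $n$-th Wigner chaos, so that its pseudo-inverse satisfies $N_0^{-1}F_k=\tfrac1n F_k$. Since $\nabla$ is linear, this gives
\[
\int_{\mathbb{R}_+}\nabla_s\bra{N_0^{-1}F_k}\sharp\bra{\nabla_s F_k}^{*}\,ds \;=\; \frac1n\int_{\mathbb{R}_+}\bra{\nabla_s F_k}\sharp\bra{\nabla_s F_k}^{*}\,ds ,
\]
so the object controlled by both inequalities is, up to the scalar factor $n$, exactly the one in the statement. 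Here ``normalized'' means $\varphi(F_k^2)=1$, and $S$ denotes the standard semicircular element, with $\varphi(S^2)=1$ and $\varphi(S^4)=2$.

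For the ``if'' implication, assume $\int_{\mathbb{R}_+}\bra{\nabla_s F_k}\sharp\bra{\nabla_s F_k}^{*}\,ds\to n\cdot 1\otimes 1$ in $L^2\bra{\mathscr{A}\otimes\mathscr{A},\varphi\otimes\varphi}$. Dividing by $n$ and applying the Cauchy--Schwarz inequality for the tracial state $\varphi\otimes\varphi$ (of total mass one) upgrades this $L^2$ convergence to convergence in $L^1$ of the modulus:
\[
\varphi\otimes\varphi\left(\left|\,\frac1n\int_{\mathbb{R}_+}\bra{\nabla_s F_k}\sharp\bra{\nabla_s F_k}^{*}\,ds-1\otimes 1\,\right|\right)\longrightarrow 0 .
\]
By \eqref{wignercasebound} (and the identification above) this forces $d_{\mathcal{C}_2}(F_k,S)\to 0$, hence $F_k\to S$ in law, since $d_{\mathcal{C}_2}$ metrizes convergence in distribution.

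For the ``only if'' implication, assume $F_k\to S$ in law. Because the $F_k$ are Wigner integrals of the fixed order $n$ with unit variance, their operator norms are uniformly bounded, so their spectral distributions live in a common compact set and convergence in law upgrades to convergence of all moments; in particular $\varphi(F_k^4)\to\varphi(S^4)=2$ (this is also one implication of the free Fourth Moment Theorem \cite[Theorem 1.3]{kemp_wigner_2012}). The Main Theorem then gives
\[
\varphi\otimes\varphi\left(\left|\,\frac1n\int_{\mathbb{R}_+}\bra{\nabla_s F_k}\sharp\bra{\nabla_s F_k}^{*}\,ds-1\otimes 1\,\right|^2\right)\leq C_n\bra{\varphi(F_k^4)-2}\longrightarrow 0 ,
\]
and multiplying through by $n^2$ yields $\int_{\mathbb{R}_+}\bra{\nabla_s F_k}\sharp\bra{\nabla_s F_k}^{*}\,ds\to n\cdot 1\otimes 1$ in $L^2\bra{\mathscr{A}\otimes\mathscr{A},\varphi\otimes\varphi}$, as claimed.

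I expect the only genuinely non-formal point to be the passage, in the ``only if'' direction, from convergence in law to convergence of the fourth moment; this is exactly where the uniform operator-norm control of fixed-order Wigner integrals (equivalently, free hypercontractivity) is used. Everything else is bookkeeping around the scalar $n$ produced by $N_0^{-1}$ together with the elementary inequality $\varphi\otimes\varphi(|X|)\le\sqrt{\varphi\otimes\varphi(|X|^2)}$ for the trace.
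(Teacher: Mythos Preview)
Your proof is correct and follows essentially the same route as the paper, which cites \cite{kemp_wigner_2012} for $(iv)\Rightarrow(i)$ (i.e.\ the bound \eqref{wignercasebound} plus Cauchy--Schwarz, exactly as you spell out) and obtains $(i)\Rightarrow(iv)$ by passing through the fourth moment and invoking Theorem~\ref{maintheorem1}. One small simplification: in this paper convergence in law is \emph{defined} as convergence of all polynomial moments, so the step $\varphi(F_k^4)\to 2$ is immediate and your appeal to uniform operator-norm bounds or hypercontractivity is not needed.
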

This is a free analogue of the main result of \cite{nualart_central_2008}.

Our findings contribute to the growing literature on free limit theorems
obtained by means of free Malliavin calculus and free stochastic
analysis. Earlier results include the already mentioned free Fourth Moment
Theorem for multiple Wigner integrals~\cite{kemp_wigner_2012}, its
multidimensional extension~\cite{nourdin_multi-dimensional_2013}, the free
Fourth Moment Theorem for free Poisson multiple integrals proved
in~\cite{bourguin_semicircular_2014} and~\cite{bourguin_vector-valued_2016},
free non-central limit theorems for Wigner and free Poisson integrals obtained
in~\cite{deya_convergence_2012}, \cite{nourdin_poisson_2013}
and~\cite{bourguin_poisson_2015}, as well as limit theorems for the $q$-Brownian
motion~\cite{deya_fourth_2013} and convergence of free
processes~\cite{nourdin_central_2014}. However, all these results,
with the exception of~\cite{kemp_wigner_2012} for the case of second order
Wigner integrals, are not quantitative. In the commutative setting, which
inspired this line of research in the context of free probability theory, the
picture is much more complete. Here, quantitative limit theorems exist in the
framework of Wiener integrals (\cite{nualart_central_2005,peccati_gaussian_2005,nualart_central_2008,nourdin_steins_2009,nourdin_multivariate_2010,nourdin_noncentral_2009}
and references therein), Poisson integrals
(\cite{peccati_steins_2010,peccati_multi-dimensional_2010,peccati_chen-stein_2011,bourguin_portmanteau_2014,peccati_gamma_2013}
and references therein) and eigenfunctions of diffusive Markov generators
(\cite{ledoux_chaos_2012,azmoodeh_fourth_2014,campese_multivariate_2016}).

The rest of this paper is organized as follows: Section~\ref{prelim} introduces
the basic concepts of free probability theory and free stochastic analysis. The
biproduct formula, our fourth moment bound, as well as the Nualart-Ortiz-Latorre
characterization are presented and proved in Section~\ref{mainressections}. We
conclude by providing a Berry-Esseen bound for the free Breuer-Major theorem for
the free fractional Brownian motion in Section~\ref{breuermajorsection}.

\section{Preliminaries}
\label{prelim}
\subsection{Elements of free probability}
In the following, a short introduction to free probability theory is
provided. For a thorough and complete treatment, see \cite{nica_lectures_2006},
\cite{voiculescu_free_1992} and \cite{hiai_semicircle_2000}. Let $\left(
  \mathscr{A}, \varphi \right) $ be a tracial $W^*$-probability space, that is
$\mathscr{A}$ is a von Neumann algebra with involution $*$ and $\varphi \colon
\mathscr{A} \rightarrow \mathbb{C}$ is a unital linear functional assumed to be
weakly continuous, positive (meaning that $\varphi\left( X\right) \geq 0$
whenever  $X$ is a non-negative element of $\mathscr{A}$), faithful (meaning
that $\varphi\left(XX^* \right) = 0 \Rightarrow X = 0$ for every $X \in
\mathscr{A}$) and tracial (meaning that $\varphi\left(XY \right) =
\varphi\left(YX \right) $ for all $X,Y \in \mathscr{A}$). The self-adjoint
elements of $\mathscr{A}$ will be referred to as random variables. Given a
random variable $X \in \mathscr{A}$, the law of $X$ is defined to be the unique
Borel measure on $\R$ having the same moments as $X$ (see~\cite[Proposition
3.13]{nica_lectures_2006}). The non-commutative space 
$L^2(\mathscr{A},\varphi)$ denotes the completion of $\mathscr{A}$ with respect
to the norm $\norm{X}_2 = \sqrt{\varphi\left( XX^* \right) }$. 
\begin{definition}
A collection of random variables $X_1, \ldots , X_n$ on $\left( \mathscr{A}, \varphi \right) $ is said to be free if $$\varphi\left( \left[P_1\left(X_{i_1}\right) - \varphi\left( P_1\left(X_{i_1}\right)\right)   \right] \cdots \left[P_m\left(X_{i_m}\right) - \varphi\left( P_m\left(X_{i_m}\right)\right)   \right] \right) = 0 $$ whenever $P_1, \ldots , P_m$ are polynomials and $i_1, \ldots , i_m \in \left\lbrace 1, \ldots, n\right\rbrace $ are indices with no two adjacent $i_j$ equal.
\end{definition}
 Let $X\in \mathscr{A}$. The $k$-th moment of $X$ is given by the quantity
 $\varphi(X^{k})$, $k \in \mathbb{N}_0$. Now assume that $X$ is a
 self-adjoint bounded element of $\mathscr{A}$ (in other words, $X$ is
 a bounded random variable), and write $\rho(X)= \norm{X} \in [0, \infty)$ to indicate the {\it spectral radius} of $X$. 
\begin{definition}
The {\it law} (or {\it spectral measure}) of $X$ is defined as the
unique Borel probability measure $\mu_{X}$ on the real line such that $\int_{\mathbb{R}}P(t)\ d\mu_{X}(t) = \varphi(P(X))$
for every polynomial $P \in \mathbb{R}\left[ X\right]$. A consequence
of this definition is that $\mu_X$ has support in $[-\rho(X), \rho(X)]$.

\end{definition}
The existence and uniqueness of $\mu_X$ in such a general framework are proved e.g. in \cite[Theorem 2.5.8]{tao_topics_2012} (see also \cite[Proposition 3.13]{nica_lectures_2006}). Note that, since $\mu_X$ has compact support, the measure $\mu_X$ is completely determined by the sequence $\left\lbrace \varphi(X^k) \colon k\geq 1\right\rbrace $. 

Let $\left\lbrace X_{n} \colon n \geq 1\right\rbrace $ be a sequence of non--commutative random variables, each possibly belonging to a different non-commutative probability space $(\mathscr{A}_n, \varphi_n)$. 
\begin{definition}
The sequence $\left\lbrace X_{n} \colon n \geq 1\right\rbrace $ is said to converge in distribution to a limiting non-commutative random variable $X_{\infty}$ (defined on $(\mathscr{A}_\infty, \varphi_\infty)$), if $\lim_{n \to+\infty}\varphi_n(P(X_{n})) = \varphi_\infty(P(X_{\infty}))$ for every polynomial $P\in \R[X]$.
\end{definition}
 If $X_{n}, X_\infty$ are bounded (and therefore the spectral measures $\mu_{X_n}, \mu_{X_\infty}$ are well-defined), this last relation is equivalent to saying that $$\int_\R P(t)\, \mu_{X_n}(dt) \to \int_\R P(t)\, \mu_{X_\infty}(dt).$$ An application of the method of moments yields immediately that, in this case, one has also that $\mu_{X_n}$ weakly converges to $\mu_{X_\infty}$, that is $\mu_{X_n}(f) \to \mu_{X_\infty}(f)$, for every $f: \R\to \R$ bounded and continuous (note that no additional uniform boundedness assumption is needed). 

Let $h(x) = \int_{\R}e^{ix\xi}\nu(d\xi)$ be the Fourier transform of a complex measure $\nu$ on $\R$. Note that, as $\nu$ is finite, $h$ is continuous and bounded. For such functions $h$, define the seminorm $\mathscr{I}_2(h)$ by 
\begin{equation*}
\mathscr{I}_2(h) = \int_{\R}\xi^2 \vert \nu \vert (d\xi).
\end{equation*}
Let $\mathcal{C}_2$ denote the set of those functions $h$ for which $\mathscr{I}_2(h) < \infty$. Using the seminorm $\mathscr{I}_2$ and the set of functions $\mathcal{C}_2$, one can define a distance between two self-adjoint random variables.
\begin{definition}
\label{dc2distance}
For two self-adjoint random variables $X,Y$, the distance $d_{\mathcal{C}_2}(X,Y)$ between $X$ and $Y$ is defined as 
\begin{equation*}
d_{\mathcal{C}_2}(X,Y) = \sup \left\lbrace \left| \varphi(h(X)) - \varphi(h(Y)) \right| \colon h \in \mathcal{C}_2,\ \mathscr{I}_2(h) \leq 1\right\rbrace.
\end{equation*}
\end{definition}
 As is proved in \cite{kemp_wigner_2012}, the distance $d_{\mathcal{C}_2}$ is
 weaker than the Wasserstein distance but still metrizes convergence in law. 

\begin{definition}
The centered semicircular distribution with variance $t>0$, denoted by $\mathcal{S}(0,t)$, is the probability distribution given by $$\mathcal{S}(0,t)(dx) = (2\pi t)^{-1}\sqrt{4t-x^2}dx, \quad |x|< 2\sqrt{t}.$$ 
\end{definition}

\begin{definition}
A free Brownian motion $S$ consists of: (i) a filtration $\left\lbrace \mathscr{A}_t \colon t \geq 0 \right\rbrace $ of von Neumann sub-algebras of $\mathscr{A}$ (in particular, $\mathscr{A}_s \subset \mathscr{A}_t$ for $0 \leq s < t$), (ii) a collection $S = \left\lbrace S_t \colon t\geq 0\right\rbrace $ of self-adjoint operators in $\mathscr{A}$ such that: (a) $S_0 = 0$ and $S_t \in \mathscr{A}_t$ for all $t \geq 0$, (b) for all $t \geq 0$, $S_t$ has a semicircular distribution with mean zero and variance $t$, and (c) for all $0 \leq u < t$, the increment $S_t - S_u$ is free with respect to $\mathscr{A}_u$, and has a semicircular distribution with mean zero and variance $t-u$.
\end{definition}

 For every integer $n\geq 1$, the space $L^2\left( \R_{+}^n;\mathbb{C}\right) = L^2\left( \R_{+}^n\right)$ denotes the collection of all complex-valued functions on $\R_{+}^n$ that are square-integrable with respect to the Lebesgue measure on $\R_{+}^n$. 
\begin{definition}
\label{mirrorsymdef}
Let $n$ be a natural number and let $f$ be a function in $L^2\left( \R_{+}^n\right)$. 
\begin{enumerate}
\item The adjoint of $f$ is the function $f^{\ast}\left(t_1, \ldots , t_n \right) = \overline{f\left(t_n, \ldots , t_1\right)}$.
\item The function $f$ is called mirror-symmetric if $f = f^{\ast}$, i.e., if $$f\left(t_1, \ldots , t_n \right) = \overline{f\left(t_n, \ldots , t_1\right)}$$ for almost all $\left( t_1,\ldots , t_n\right) \in \R_{+}^{n}$ with respect to the product Lebesgue measure.
\item The function $f$ is called (fully) symmetric if it is real-valued and, for any permutation $\sigma$ in the symmetric group $\mathfrak{S}_{n}$, it holds that $f\left( t_1, \ldots , t_n\right) = f\left( t_{\sigma(1)}, \ldots , t_{\sigma(n)}\right) $ for almost all $\left( t_1,\ldots , t_n\right) \in \R_{+}^{n}$ with respect to the product Lebesgue measure.
\end{enumerate}
\end{definition}
\begin{definition}
\label{defcontractions}
Let $n,m$ be natural numbers and let $f \in L^2\left( \R_{+}^n\right)$ and $g \in L^2\left( \R_{+}^m\right)$. Let $p \leq n \wedge m$ be a natural number. The $p$-th nested contraction $f \cont{p} g$ of $f$ and $g$ is the $L^2\left( \R_{+}^{n+m-2p}\right)$ function defined by nested integration of the middle $p$ variables in $f \otimes g$:
\begin{eqnarray*}
 f  \cont{p} g (t_1,\ldots, t_{n+m - 2p}) &=& \int_{\R_{+}^{p}}f(t_1,\ldots , t_{n-p},s_1, \ldots , s_p) \\
&& \qquad\qquad g(s_p, \ldots , s_1 , t_{n-p+1},\ldots, t_{n+m-2p})ds_1 \cdots ds_p.
\end{eqnarray*}
In the case where $p=0$, the function $f  \cont{0} g$ is just given by $f \otimes g$. 
\end{definition}
 For $f \in L^2\left(\R_{+}^n\right)$, we denote by $I_{n}(f)$ the multiple
 Wigner integral of $f$ with respect to the free Brownian motion as introduced
 in \cite{biane_stochastic_1998}. The space $L^2(\mathcal{S}, \varphi) =
 \{I_n(f) : f\in L^2(\R_{+}^n), n\geq 0\}$ is a unital $\ast$-algebra, with
 product rule given, for any $n,m\geq 1$, $f \in L^2\left(\R_{+}^n\right)$, $g
 \in L^2\left(\R_{+}^m\right)$, by 
\begin{equation}
\label{productformula}
I_n(f)I_m(g) = \sum_{p=0}^{n \wedge m} I_{n+m-2p}\left( f \cont{p} g\right) 
\end{equation}
and involution $I_n(f)^{\ast} = I_n(f^{\ast})$. For a proof of this formula, see \cite{biane_stochastic_1998}. Furthermore, as is well-known, multiple integrals of different orders are orthogonal in $L^2(\mathscr{A},\varphi)$, whereas for two integrals of the same order, the Wigner isometry
\begin{equation}
\label{wigneriso}
\varphi\left( I_n(f)I_n(g)^{*}\right) = \left\langle f,g \right\rangle_{L^2\left(\mathbb{R}_{+}^n \right) }.
\end{equation}
holds.
\begin{remark}
Observe that it follows from the definition of the involution on the algebra $L^2(\mathcal{S}, \varphi)$ that operators of the type $I_n(f)$ are self-adjoint if and only if $f$ is mirror-symmetric.
\end{remark}
\subsection{Bi-integrals and free gradient operator}
\label{subsectionbiintegrals}
This subsection introduces the notion of bi-integral and the action of the free
gradient operator on Wigner integrals.  For a full treatment of these objects, see~\cite{biane_stochastic_1998}. 

 Let $n,m$ be two positive integers and $f=g\otimes h \in L^2\left( \mathbb{R}_{+}^{n}\right) \otimes L^2\left( \mathbb{R}_{+}^{m}\right)$. Then, the Wigner bi-integral $I_n \otimes I_m(f)$ is defined as 
\begin{equation*}
I_n \otimes I_m(f) = I_n(g) \otimes I_m(h).
\end{equation*}
This definition is extended linearly to generic elements $f \in L^2\left(
  \mathbb{R}_{+}^{n}\right) \otimes L^2\left( \mathbb{R}_{+}^{m}\right) \cong
L^2\left( \mathbb{R}_{+}^{n+m}\right)$. From the Wigner isometry
\eqref{wigneriso} for multiple integrals, we obtain the so called Wigner bisometry: for $f \in L^2\left( \mathbb{R}_{+}^{n}\right) \otimes L^2\left( \mathbb{R}_{+}^{m}\right)$ and $g \in L^2\left( \mathbb{R}_{+}^{n'}\right) \otimes L^2\left( \mathbb{R}_{+}^{m'}\right)$ it holds that
\begin{equation}
\label{wignerbisometry}
\varphi \otimes \varphi\left(I_n \otimes I_m(f) I_{n'} \otimes I_{m'}(g)^{*} \right)= \begin{cases} \left\langle f,g \right\rangle_{L^2\left( \mathbb{R}_{+}^{n}\right) \otimes L^2\left( \mathbb{R}_{+}^{m}\right)} & \quad \text{if $n=n'$ and $m=m'$},\\ 0 & \quad \text{otherwise}\end{cases} 
\end{equation}
\begin{remark}
Observe that, for any natural numbers $n,m$ and any function $g\otimes h \in  L^2\left( \mathbb{R}_{+}^{n}\right) \otimes L^2\left( \mathbb{R}_{+}^{m}\right)$, it holds that 
\begin{align*}
I_{n} \otimes I_{m}\left(g\otimes h\right)^{*} & = \left( I_{n}\left(g \right)  \otimes I_{m}\left( h\right)\right)^{*} = I_{n}\left(g \right)^{*}  \otimes I_{m}\left( h\right)^{*} \\ & = I_{n}\left(g^{*} \right)  \otimes I_{m}\left(h ^{*}\right) = I_{n} \otimes I_{m}\left(\left( g\otimes h\right) ^{*}\right),
\end{align*}
so that the operator $I_{n}\otimes I_{m}\left(g\otimes h\right)$ is
self-adjoint if and only if both the function $g$ and $h$ are mirror-symmetric. By continuous extension (using the Wigner
bisometry \eqref{wignerbisometry}), it holds that for any fully symmetric function $f \in L^2\left( \mathbb{R}_{+}^{n}\right) \otimes L^2\left( \mathbb{R}_{+}^{m}\right)$, the operator $I_{n} \otimes I_{m}\left(f\right)$ is self-adjoint.
\end{remark}
Let $\left( \mathscr{A},\varphi\right) $ be a $W^{*}$-probability space. An $\mathscr{A} \otimes \mathscr{A}$-valued stochastic process $t\mapsto U_t$ is called a biprocess. For $p \geq 1$, $U$ is an element of $\mathscr{B}_{p}$, the space of $L^p$-biprocesses, if its norm
\begin{equation*}
\norm{U}_{\mathscr{B}_p}^{2} = \int_{0}^{\infty} \norm{U_t}_{L^p\left(\mathscr{A} \otimes \mathscr{A}, \varphi \otimes \varphi\right) }^{2}dt
\end{equation*}
is finite. 

 The free gradient operator $\nabla \colon L^2\left(\mathcal{S},\varphi \right) \rightarrow \mathscr{B}_{2} $ is a densely-defined and closable operator whose action on Wigner integrals is given by
\begin{equation*}
\nabla_t I_n(f) = \sum_{k=1}^{n}I_{k-1} \otimes I_{n-k}\left(f_t^{(k)} \right),
\end{equation*}
where $f_t^{(k)}(x_1,\ldots,x_{n-1}) = f(x_1,\ldots, x_{k-1},t,x_{k},\ldots, x_{n-1})$ is viewed as an element of $L^2\left(\mathbb{R}_{+}^{k-1} \right) \otimes L^2\left(\mathbb{R}_{+}^{n-k} \right)$.
\begin{remark}
For general elements of $L^2\left(\mathcal{S},\varphi \right)$ in its domain, the free gradient is customarily defined via a Fock space construction (see \cite{biane_stochastic_1998}). This level of generality will not be needed in the sequel.
\end{remark}
 We will also make use of the pseudo-inverse of the number operator $N_0^{-1}$, whose action on a multiple Wigner integral of order $n \geq 1$ is given by $N_0^{-1} I_n(f) = \frac{1}{n}I_n(f)$.

 Before concluding this section, we introduce $\sharp$ to be the associative
 action of $\mathscr{A} \otimes \mathscr{A}^{\operatorname{op}}$ (where
 $\mathscr{A}^{\operatorname{op}}$ denotes the opposite algebra) on $\mathscr{A}
 \otimes \mathscr{A}$, as 
\begin{equation}
\label{sharpdef}
(A\otimes B) \sharp (C \otimes D) = (AC) \otimes (DB).
\end{equation}
Furthermore, we also write $\sharp$ to denote the action of $\mathscr{A} \otimes L^2\left( \mathbb{R}_{+}\right) \otimes  \mathscr{A}^{\operatorname{op}}$ on $\mathscr{A} \otimes L^2\left( \mathbb{R}_{+}\right) \otimes  \mathscr{A}$, as
\begin{equation*}
\label{sharpdefwithhilbert}
(A\otimes f \otimes  B) \sharp (C \otimes g \otimes  D) = (AC) \otimes fg \otimes  (DB).
\end{equation*}
 The multiplication $\sharp$ naturally appears in the following bound
 from~\cite{kemp_wigner_2012} on the $d_{\mathcal{C}_2}$ distance introduced
 above.
 
\begin{theorem}[\cite{kemp_wigner_2012}]
Let $S$ be a standard semicircular random variable and $F \in L^2(\mathcal{S},\varphi)$ be self-adjoint, in the domain of the free gradient $\nabla$ and such that $\varphi(F)=0$. Then, 
\begin{equation}
\label{kempbound}
d_{\mathcal{C}_2}(F,S) \leq \frac{1}{2}\varphi \otimes \varphi\left( \left | \int_{\mathbb{R}_{+}}\nabla_s\left( N_0^{-1} F\right) \sharp \left(\nabla_s F \right)^{*}  ds- 1\otimes 1 \right |\right).
\end{equation}
\end{theorem}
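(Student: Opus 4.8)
The plan is to reduce, through the Fourier representation underlying $\mathcal{C}_2$, to a pointwise bound on characteristic functions, and to prove that bound by a free interpolation between $F$ and $S$ in which the free Malliavin integration by parts (applied at $F$) and the semicircular integration by parts (applied at $S$) cancel against each other, leaving only the discrepancy term $\int_{\R_{+}}\nabla_s(N_0^{-1}F)\sharp(\nabla_s F)^{*}\,ds-1\otimes 1$. Write $h(x)=\int_{\R}e^{ix\xi}\,\nu(d\xi)$ for a finite complex measure $\nu$, so that $\varphi(h(F))-\varphi(h(S))=\int_{\R}\bra{\varphi(e^{i\xi F})-\varphi(e^{i\xi S})}\,\nu(d\xi)$. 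Since $d_{\mathcal{C}_2}(F,S)$ is the supremum of $\abs{\varphi(h(F))-\varphi(h(S))}$ over $h$ with $\mathscr{I}_2(h)=\int_{\R}\xi^2\,\abs{\nu}(d\xi)\le 1$, it suffices to establish
\begin{equation*}
\abs{\varphi(e^{i\xi F})-\varphi(e^{i\xi S})}\le\frac{\xi^2}{2}\,\varphi\otimes\varphi\bra{\abs{\int_{\R_{+}}\nabla_s(N_0^{-1}F)\sharp(\nabla_s F)^{*}\,ds-1\otimes 1}}=:\frac{\xi^2}{2}\,M
\end{equation*}
for every $\xi\in\R$; integrating this against $\nu$ then produces exactly the factor $\frac12$ in \eqref{kempbound}.

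To prove the pointwise bound, realize $S$ as free from $F$ (passing to a free product $W^{*}$-probability space and extending $\nabla$ by $\nabla_s S=0$), and set $X_t=\sqrt{t}\,F+\sqrt{1-t}\,S$, so that $X_1=F$, $X_0=S$ and $\varphi(e^{i\xi F})-\varphi(e^{i\xi S})=\int_0^1\frac{d}{dt}\varphi(e^{i\xi X_t})\,dt$. Expanding $\frac{d}{dt}e^{i\xi X_t}$ by the Duhamel formula and using traciality to collapse the resulting integral, one gets $\frac{d}{dt}\varphi(e^{i\xi X_t})=i\xi\,\varphi(X_t'\,e^{i\xi X_t})$ with $X_t'=\frac{1}{2\sqrt{t}}F-\frac{1}{2\sqrt{1-t}}S$.

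Next I would integrate by parts in each of the two terms. For the $S$-term, the semicircular integration by parts $\varphi(S\,Y)=\varphi\otimes\varphi(\partial_S Y)$ — valid since $S$ is free from $F$, with $\partial_S$ the free difference quotient in the $S$-variable — together with the Duhamel expansion $\partial_S e^{i\xi X_t}=i\xi\sqrt{1-t}\int_0^1 e^{iv\xi X_t}\otimes e^{i(1-v)\xi X_t}\,dv$, gives $\varphi(S\,e^{i\xi X_t})=i\xi\sqrt{1-t}\int_0^1\varphi\otimes\varphi\bra{(1\otimes 1)\sharp(e^{iv\xi X_t}\otimes e^{i(1-v)\xi X_t})}dv$. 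For the $F$-term, using the free Malliavin integration by parts — which, given $\varphi(F)=0$, rests on $F=\delta\nabla N_0^{-1}F$ (with $\delta$ the adjoint of $\nabla$) — the chain rule $\nabla_s e^{i\xi X_t}=i\xi\sqrt{t}\int_0^1(e^{iv\xi X_t}\otimes e^{i(1-v)\xi X_t})\sharp\nabla_s F\,dv$ (here $\nabla_s S=0$), and the associativity of $\sharp$, one obtains $\varphi(F\,e^{i\xi X_t})=i\xi\sqrt{t}\int_0^1\varphi\otimes\varphi\bra{\Gamma\sharp(e^{iv\xi X_t}\otimes e^{i(1-v)\xi X_t})}dv$, where $\Gamma=\int_{\R_{+}}\nabla_s(N_0^{-1}F)\sharp(\nabla_s F)^{*}\,ds$. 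Substituting into $\frac{d}{dt}\varphi(e^{i\xi X_t})=i\xi\bra{\frac{1}{2\sqrt{t}}\varphi(F e^{i\xi X_t})-\frac{1}{2\sqrt{1-t}}\varphi(S e^{i\xi X_t})}$, the prefactors $\frac{1}{2\sqrt{t}}\cdot\sqrt{t}$ and $\frac{1}{2\sqrt{1-t}}\cdot\sqrt{1-t}$ both collapse to $\frac12$, the two factors of $i\xi$ produce $-\xi^2$, and the $1\otimes 1$ appearing inside $\Gamma$ cancels exactly the $S$-contribution, leaving
\begin{equation*}
\frac{d}{dt}\varphi(e^{i\xi X_t})=-\frac{\xi^2}{2}\int_0^1\varphi\otimes\varphi\bra{(\Gamma-1\otimes 1)\sharp(e^{iv\xi X_t}\otimes e^{i(1-v)\xi X_t})}dv.
\end{equation*}

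To conclude, use that $e^{iv\xi X_t}$ is unitary and that, by traciality, $\varphi\otimes\varphi\bra{W\sharp(u_1\otimes u_2)}=\varphi\otimes\varphi\bra{(u_1\otimes u_2)W}$, so that $\abs{\varphi\otimes\varphi\bra{W\sharp(u_1\otimes u_2)}}\le\norm{u_1\otimes u_2}\,\varphi\otimes\varphi(\abs{W})\le\varphi\otimes\varphi(\abs{W})$ whenever $u_1,u_2$ are unitary; applying this with $W=\Gamma-1\otimes 1$ and integrating over $t\in[0,1]$ yields $\abs{\varphi(e^{i\xi F})-\varphi(e^{i\xi S})}\le\frac{\xi^2}{2}\,M$, and then integrating against $\nu$ and taking the supremum over admissible $h$ gives \eqref{kempbound}. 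The main obstacle is the third paragraph: making the two integration by parts formulas rigorous for the operator $X_t$ on the free product and for a general $F$ merely in the domain of $\nabla$ (by density of finite-chaos elements and closability of $\nabla$, using crucially that $\nabla_s$ annihilates $S$), and carrying out the $\sharp$- and adjoint-bookkeeping precisely enough to witness the exact cancellation that leaves $\Gamma-1\otimes 1$ and nothing else. The differentiation under the integral sign and the uses of Fubini are harmless since $\abs{\nu}$ is finite and $\mathscr{I}_2(h)<\infty$; note also that the apparent singularities of $X_t'$ at $t=0,1$ disappear in the final formula for $\frac{d}{dt}\varphi(e^{i\xi X_t})$, which is bounded by $\frac{\xi^2}{2}M$ uniformly in $t$.
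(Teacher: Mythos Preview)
The paper does not prove this theorem: it is stated with attribution to \cite{kemp_wigner_2012} and no proof is given. So there is nothing in the present paper to compare your argument against beyond the citation itself.

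That said, your sketch is essentially the proof given in \cite{kemp_wigner_2012}. There, the bound is obtained precisely by the Fourier reduction to characteristic functions, the free interpolation $X_t=\sqrt{t}\,F+\sqrt{1-t}\,S$ with $S$ taken free from $F$, differentiation via Duhamel, and then two integration-by-parts steps: the semicircular Schwinger--Dyson relation for the $S$-term and the free Malliavin duality $F=\delta\nabla N_0^{-1}F$ for the $F$-term, after which the $1\otimes 1$ contribution from the semicircular side cancels against the corresponding piece of $\Gamma$. Your identification of the ``main obstacle'' is accurate and matches where the work lies in \cite{kemp_wigner_2012}: one must extend $\nabla$ to the free product (with $\nabla_s S=0$), verify the chain rule for $\nabla_s e^{i\xi X_t}$, and track the $\sharp$/adjoint conventions so that the duality produces exactly $\int_{\R_{+}}\nabla_s(N_0^{-1}F)\sharp(\nabla_s F)^{*}\,ds$. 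The final estimate via unitarity of $e^{iv\xi X_t}$ and traciality is also how the original argument concludes. In short, your proposal is correct and is the same route as the source paper; the present article simply quotes the result.
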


\section{Main results}
\label{mainressections}

\subsection{Bicontractions and biproduct formula}
As announced in the introduction, we will need an extension of the product
formula~\eqref{productformula} from~\cite{biane_stochastic_1998}. To this end,
we introduce the notion of bicontraction.

\begin{definition}
\label{defbicont}
Let $n_1, m_1, n_2, m_2$ be positive integers. Let $f \in L^2\left(
  \mathbb{R}_{+}^{n_1}\right) \otimes L^2\left( \mathbb{R}_{+}^{m_1}\right)
\cong L^2\left( \mathbb{R}_{+}^{n_1+m_1}\right)$ and $g \in L^2\left(
  \mathbb{R}_{+}^{n_2}\right) \otimes L^2\left( \mathbb{R}_{+}^{m_2}\right)
\cong L^2\left( \mathbb{R}_{+}^{n_2+m_2}\right)$ and let $p \leq n_1 \wedge
n_2$, $r \leq m_1 \wedge m_2$ be natural numbers. The $(p,r)$-bicontraction $f
\conts{p,r} g$ is the $L^2\left( \mathbb{R}_{+}^{n_1+n_2 -2p}\right) \otimes
L^2\left( \mathbb{R}_{+}^{m_1+m_2 -2r}\right) \cong L^2\left(
  \mathbb{R}_{+}^{n_1+n_2+m_1+m_2 -2p-2r}\right)$ function defined by 
\begin{multline*}
f \conts{p,r} g(  t_1, \ldots , t_{n_1+n_2+m_1+m_2 -2p-2r} ) 
\\ 
\begin{aligned}
 = \int_{\mathbb{R}_{+}^{p+r}}f(t_1, \ldots , t_{n_1-p},&s_p,\ldots,s_1, y_1,\ldots, y_r, \\ &  t_{n_1 + n_2 +m_2 -2p -r +1}, \ldots , t_{n_1 + n_2 + m_1 +m_2 -2p -2r}  ) 
\\  &\times g\left( s_1 , \ldots , s_p , t_{n_1-p+1},\ldots, t_{n_1+n_2+m_2 -2p-r}, y_r, \ldots, y_1\right)
\end{aligned}
\\ ds_1 \cdots ds_p dy_1 \cdots dy_r.
\end{multline*}
\end{definition}
\begin{remark}
Observe that for $f = f_1 \otimes f_2$ and $g = g_1 \otimes g_2$ with $f_1 \in L^2\left( \mathbb{R}_{+}^{n_1}\right)$, $f_2 \in L^2\left( \mathbb{R}_{+}^{m_1}\right)$, $g_1 \in L^2\left( \mathbb{R}_{+}^{n_2}\right)$ and $g_2 \in L^2\left( \mathbb{R}_{+}^{m_2}\right)$, the above definition reads
\begin{equation}
\label{separarblecontractioncase}
f \conts{p,r} g = \left( f_1 \otimes f_2\right) \conts{p,r} \left( g_1 \otimes g_2\right)  = \left( f_1 \cont{p}  g_1\right) \otimes \left( g_2 \cont{r}  f_2\right),
\end{equation}
where the contractions appearing on the right-hand side are the nested
contractions introduced in Definition~\ref{defcontractions}.
\end{remark}
\begin{remark}
In what follows, for $f,g$ as in Definition \ref{defbicont}, we write $f
\conts{p,r} g$ and $f \conts{s} g$ to denote the bicontraction and contraction
of $f$ and $g$, respectively. Here, we have somewhat abused notation by using
the same symbol for a function living in $L^2\left( \mathbb{R}_{+}^{n_1}\right)
\otimes L^2\left( \mathbb{R}_{+}^{m_1}\right)$ or its identification in
$L^2\left( \mathbb{R}_{+}^{n_1+m_1}\right)$. However, it will always be clear
from the type of contraction used which version of the function is being
considered.  
\end{remark}
 The following result collects some properties of bicontractions in the
 case where both functions are symmetric. 
\begin{lemma}
\label{propertiesofconts}
For $n_1, m_1, n_2, m_2 \in \mathbb{N}$, let $f \in L^2\left(
  \mathbb{R}_{+}^{n_1}\right) \otimes L^2\left( \mathbb{R}_{+}^{m_1}\right)
\cong L^2\left( \mathbb{R}_{+}^{n_1+m_1}\right)$ and $g \in L^2\left(
  \mathbb{R}_{+}^{n_2}\right) \otimes L^2\left( \mathbb{R}_{+}^{m_2}\right)
\cong L^2\left( \mathbb{R}_{+}^{n_2+m_2}\right)$ be fully symmetric
functions. Furthermore, let $p \leq n_1 \wedge n_2$ and $r \leq m_1 \wedge m_2$ be natural
numbers such that $p+r = p'+r'$. Then, the following is true.
\begin{enumerate}[(i)]
\item $f \conts{p,r} g \cong f \conts{p+r} g$.
\item $f \conts{p,r} g =f \conts{p',r'} g$.
\item $\norm{f \conts{p,r} g}_{L^2\left( \mathbb{R}_{+}^{n_1+n_2 -2p}\right) \otimes L^2\left( \mathbb{R}_{+}^{m_1+m_2 -2r}\right)}^{2} = \norm{f \cont{p+r} g}_{L^2\left( \mathbb{R}_{+}^{n_1+n_2+m_1+m_2 -2p-2r}\right)}^{2}$.
\item $f \conts{n_1,m_1} f = \norm{f}_{L^2\left( \mathbb{R}_{+}^{n_1}\right) \otimes L^2\left( \mathbb{R}_{+}^{m_1}\right)}^{2} 1\otimes 1$, which is a constant in $L^2\left( \mathbb{R}_{+}^{n_1}\right) \otimes L^2\left( \mathbb{R}_{+}^{m_1}\right)$.
\end{enumerate}
\end{lemma}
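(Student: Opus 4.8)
The plan is to derive all four items from a single structural fact: if $f$ and $g$ are fully symmetric, then a nested contraction $f \conts{s} g$ depends only on the number $s$ of contracted pairs and is insensitive to \emph{which} variables are contracted and to the order-reversals of integration blocks appearing in Definitions~\ref{defcontractions} and~\ref{defbicont}. The reason is that $f$ is invariant under all permutations of its $n_1+m_1$ arguments and $g$ under all permutations of its $n_2+m_2$ arguments, so inside $f$ (resp. $g$) one may freely relocate the contracted variables and undo the order-reversal of any block of them without changing the value.

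First I would prove (i). Set $N = n_1+m_1$, $M = n_2+m_2$ and $s = p+r$; note that $s \leq (n_1\wedge n_2)+(m_1\wedge m_2) \leq N\wedge M$, so $f\conts{s} g$ is well defined. Starting from the explicit expression in Definition~\ref{defbicont}, I would use the symmetry of $f$ to gather the integration variables $s_1,\dots,s_p$ and $y_1,\dots,y_r$ occurring in $f$ into a single block of $s$ variables placed, say, at the end of the argument list of $f$, simultaneously undoing the reversal $s_p,\dots,s_1$; the non-contracted arguments of $f$, which account for $N-s$ of the output variables, are left in place. Performing the symmetric manipulation on $g$ (moving $s_1,\dots,s_p$ and $y_r,\dots,y_1$ to the front and undoing that reversal) brings the integrand to the form $f(\vec a,\vec w)\, g(\vec w,\vec b)$, where $\vec w\in\mathbb{R}_{+}^{s}$ collects the contracted variables and $\vec a\in\mathbb{R}_{+}^{N-s}$, $\vec b\in\mathbb{R}_{+}^{M-s}$ are the non-contracted variables of $f$ and $g$. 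Re-inserting the reversal $\vec w\mapsto \vec w^{\mathrm{rev}}$ — again harmless, by the symmetry of $g$ — this is precisely the integrand of $f\conts{s} g(\vec a,\vec b)$, and hence, under the canonical isometric identification $L^2(\mathbb{R}_{+}^{a})\otimes L^2(\mathbb{R}_{+}^{b})\cong L^2(\mathbb{R}_{+}^{a+b})$, one obtains $f\conts{p,r} g\cong f\conts{p+r} g$. I expect the hard part to be purely bookkeeping: one has to track the index ranges in Definition~\ref{defbicont} and the way the non-contracted variables of $f$ and $g$ are interleaved in the output, so as to pin down that $\cong$ refers to the correct identification. There is no analytic content beyond Fubini's theorem and the symmetry of $f$ and $g$.

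The remaining three items then follow quickly. For (ii), since $p+r=p'+r'$ the functions $f\conts{p+r} g$ and $f\conts{p'+r'} g$ are literally the same element of $L^2(\mathbb{R}_{+}^{n_1+n_2+m_1+m_2-2p-2r})$, so two applications of (i) give $f\conts{p,r} g = f\conts{p',r'} g$ modulo the canonical identifications. For (iii), the identification used in (i) is an isometry of Hilbert spaces, so equality of the two functions yields equality of their squared $L^2$-norms; alternatively, one may simply note that the $L^2$-norm is permutation-invariant. For (iv), I would apply (i) with $g=f$, $p=n_1$, $r=m_1$, giving $f\conts{n_1,m_1} f\cong f\conts{n_1+m_1} f$; the right-hand side is the full nested contraction $\int_{\mathbb{R}_{+}^{N}} f(w_1,\dots,w_N)\, f(w_N,\dots,w_1)\, dw_1\cdots dw_N$, which by the symmetry and real-valuedness of $f$ equals $\int_{\mathbb{R}_{+}^{N}} f(\vec w)^2\, d\vec w = \norm{f}_{L^2(\mathbb{R}_{+}^{n_1})\otimes L^2(\mathbb{R}_{+}^{m_1})}^{2}$, a scalar — that is, the asserted multiple of $1\otimes 1$.
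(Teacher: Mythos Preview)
Your proposal is correct and follows exactly the approach the paper takes: the paper's proof is the single sentence ``Just exploit the full symmetry of $f$ in the above definition of contractions,'' and your argument is precisely a careful unpacking of that line. The bookkeeping you outline is the right way to make the one-liner rigorous, and nothing further is needed.
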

\begin{proof}
Just exploit the full symmetry of $f$ in the above definition of contractions.
\end{proof}
We are now ready to state the biproduct formula, which will be a crucial tool in
order to prove our main result.
\begin{theorem}
\label{productformulabi}
For $n_1, m_1, n_2, m_2 \in \mathbb{N}$, let $f \in L^2\left(
  \mathbb{R}_{+}^{n_1}\right) \otimes L^2\left( \mathbb{R}_{+}^{m_1}\right)
\cong L^2\left( \mathbb{R}_{+}^{n_1+m_1}\right)$ and $g \in L^2\left(
  \mathbb{R}_{+}^{n_2}\right) \otimes L^2\left( \mathbb{R}_{+}^{m_2}\right)
\cong L^2\left( \mathbb{R}_{+}^{n_2+m_2}\right)$. Then it holds that 
\begin{equation}
\label{biintegralmultformula}
I_{n_1} \otimes I_{m_1}\left(f\right) \sharp I_{n_2} \otimes I_{m_2}\left(g\right)= \sum_{p=0}^{n_1 \wedge n_2}\sum_{r=0}^{m_1 \wedge m_2}I_{n_1+n_2 -2p} \otimes I_{m_1+m_2 -2r}\left(f \conts{p,r}g\right).
\end{equation}
\end{theorem}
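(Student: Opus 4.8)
The plan is to establish \eqref{biintegralmultformula} first on elementary tensors and then to pass to the general case by bilinearity together with a density and continuity argument.

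\textbf{Reduction to elementary tensors.} Both sides of \eqref{biintegralmultformula} are bilinear in the pair $(f,g)$, and each is separately continuous in $f$ (for fixed $g$) and in $g$ (for fixed $f$) with respect to the $L^2$-norms on the kernels. For the right-hand side this holds because every bicontraction is bounded, $\norm{f\conts{p,r}g}\leq\norm{f}\norm{g}$ (equivalently, by Lemma~\ref{propertiesofconts}(iii) its norm equals that of a nested contraction, which is controlled by Cauchy--Schwarz), and because $I_{n_1+n_2-2p}\otimes I_{m_1+m_2-2r}$ is an isometry by the Wigner bisometry~\eqref{wignerbisometry}. For the left-hand side, one unfolds $\sharp$ using~\eqref{sharpdef} and uses that left or right multiplication by a fixed multiple Wigner integral maps a fixed homogeneous chaos continuously into $L^2(\mathscr{A},\varphi)$; indeed, $\norm{I_a(u)I_b(v)}_2^2=\sum_{p=0}^{a\wedge b}\norm{u\cont{p}v}_2^2\leq(a\wedge b+1)\norm{u}_2^2\norm{v}_2^2$ by orthogonality of multiple integrals of distinct orders, and tensor products of bounded operators are bounded. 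Since finite linear combinations of elementary tensors $f_1\otimes f_2$ are dense in $L^2(\mathbb{R}_{+}^{n_1+m_1})\cong L^2(\mathbb{R}_{+}^{n_1})\otimes L^2(\mathbb{R}_{+}^{m_1})$ (and similarly for $g$), it suffices to prove \eqref{biintegralmultformula} when $f=f_1\otimes f_2$ and $g=g_1\otimes g_2$ with $f_1\in L^2(\mathbb{R}_{+}^{n_1})$, $f_2\in L^2(\mathbb{R}_{+}^{m_1})$, $g_1\in L^2(\mathbb{R}_{+}^{n_2})$, $g_2\in L^2(\mathbb{R}_{+}^{m_2})$.

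\textbf{The elementary-tensor identity.} For such $f$ and $g$, the definition of the bi-integral together with the definition of $\sharp$ in~\eqref{sharpdef} gives
\[
I_{n_1}\otimes I_{m_1}(f)\,\sharp\,I_{n_2}\otimes I_{m_2}(g)=\bra{I_{n_1}(f_1)I_{n_2}(g_1)}\otimes\bra{I_{m_2}(g_2)I_{m_1}(f_2)},
\]
where the order reversal built into~\eqref{sharpdef} is what places $I_{m_2}(g_2)$ before $I_{m_1}(f_2)$. Applying the classical product formula~\eqref{productformula} to each of the two factors, then expanding the tensor product and re-indexing the double sum, turns the right-hand side into
\[
\sum_{p=0}^{n_1\wedge n_2}\sum_{r=0}^{m_1\wedge m_2}I_{n_1+n_2-2p}\otimes I_{m_1+m_2-2r}\bra{\bra{f_1\cont{p}g_1}\otimes\bra{g_2\cont{r}f_2}}.
\]
Finally, the separable-case identity~\eqref{separarblecontractioncase} rewrites $\bra{f_1\cont{p}g_1}\otimes\bra{g_2\cont{r}f_2}$ as $f\conts{p,r}g$, which yields \eqref{biintegralmultformula} for elementary tensors and hence, by the reduction step, in general.

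I do not expect a conceptual obstacle: the elementary-tensor computation is a direct unfolding of definitions plus one invocation of the already-available product formula~\eqref{productformula}, and the structural point that $g_2\cont{r}f_2$ (rather than $f_2\cont{r}g_2$) appears, matching exactly the right-hand side of~\eqref{separarblecontractioncase}, is forced by the definition of $\sharp$. The only part requiring genuine, if routine, care is the density and continuity argument of the reduction step; accordingly I would spell out there the boundedness of contractions and of multiplication on fixed homogeneous chaoses, which is what legitimizes extending the identity from elementary tensors to arbitrary $f$ and $g$.
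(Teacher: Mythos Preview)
Your proof is correct and follows essentially the same route as the paper: reduce to elementary tensors by density and continuity, then unfold $\sharp$, apply the product formula~\eqref{productformula} on each tensor factor, and recognize the resulting $(f_1\cont{p}g_1)\otimes(g_2\cont{r}f_2)$ as $f\conts{p,r}g$ via~\eqref{separarblecontractioncase}. The only minor caveat is that your parenthetical appeal to Lemma~\ref{propertiesofconts}(iii) is not quite legitimate here, since that lemma is stated for fully symmetric kernels; but your direct Cauchy--Schwarz bound $\norm{f\conts{p,r}g}\leq\norm{f}\,\norm{g}$ already suffices, so nothing is lost.
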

\begin{proof}
Using a density argument together with the bisometry property of Wigner
bi-integrals, it is enough to prove the claim for functions $f$ and $g$ of the
type $a \otimes b$ where $a \in L^2\left( \mathbb{R}_{+}^{n}\right)$ and $b \in
L^2\left( \mathbb{R}_{+}^{m}\right)$ as the subset of functions $$\left\lbrace a
  \otimes b \colon a \in L^2\left( \mathbb{R}_{+}^{n}\right),\ b \in L^2\left(
    \mathbb{R}_{+}^{m}\right) \right\rbrace $$ is dense in $L^2\left(
  \mathbb{R}_{+}^{n}\right) \otimes L^2\left( \mathbb{R}_{+}^{m}\right)$). Let
therefore 
$f = a\otimes b$ with $a \in L^2\left( \mathbb{R}_{+}^{n_1}\right)$, $b \in
L^2\left( \mathbb{R}_{+}^{m_1}\right)$ and $g = c\otimes d$ with $c \in
L^2\left( \mathbb{R}_{+}^{n_2}\right)$, $d \in L^2\left(
  \mathbb{R}_{+}^{m_2}\right)$. It holds that 
\begin{align*}
I_{n_1} \otimes I_{m_1}\left(a \otimes b\right)\sharp I_{n_2} \otimes I_{m_2}\left(c\otimes d\right)
&= I_{n_1}\left( a\right)  \otimes I_{m_1}\left( b\right) \sharp I_{n_2}\left(c \right)  \otimes I_{m_2}\left( d\right)
\\ &= I_{n_1}\left( a\right) \cdot I_{n_2}\left( c\right) \otimes  I_{m_1}\left( d\right) \cdot I_{m_2}\left( b\right).
\end{align*}
Using the usual multiplication formula for Wigner integrals on both sides of the tensor product, we get
\begin{align*}
I_{n_1}\left( a\right) \cdot I_{n_2}\left( c\right) &\otimes
  I_{m_1}\left( d\right) \cdot I_{m_2}\left( b\right) \\ &= \left(
                                                                  \sum_{p=0}^{n_1
                                                                  \wedge
                                                                  n_2}I_{n_1+n_2
                                                                  -2p}\left(
                                                                  a \cont{p}
                                                                  c\right)
                                                                  \right)
                                                                  \otimes \left(
                                                                  \sum_{r=0}^{m_1
                                                                  \wedge
                                                                  m_2}I_{m_1+m_2
                                                                  -2r}\left(d
                                                                  \cont{r}
                                                                  b\right)\right)
\\ &= \sum_{p=0}^{n_1 \wedge n_2}\sum_{r=0}^{m_1 \wedge m_2}I_{n_1+n_2 -2p}
  \otimes I_{m_1+m_2 -2r}\left(\left( a \cont{p} c \right) \otimes \left( d
  \cont{r} b\right) \right)
\\ & =  \sum_{p=0}^{n_1 \wedge n_2}\sum_{r=0}^{m_1 \wedge m_2}I_{n_1+n_2 -2p}
      \otimes I_{m_1+m_2 -2r}\left(\left( a \otimes b \right) \conts{p,r}
      \left( c \otimes d\right) \right), 
\end{align*}
where the last equality follows from the identity
\eqref{separarblecontractioncase}. 
\end{proof}
\begin{remark}\hfill
\begin{enumerate}[1.]
\item By taking $m_1 = m_2 =0$, $f = u\otimes 1$ and $g = v \otimes 1$, we
  recover the usual product formula~\eqref{productformula} for Wigner integrals. 
\item Note that a similar version of the above biproduct formula also holds for
  the usual tensor product (with a slightly different definition for the
  bicontractions). Furthermore, using the same methodology, one could also
  define contractions and product formulae for higher order tensors.
\end{enumerate}
\end{remark}

\subsection{Quantitative Fourth Moment Theorems}
 We are now in the position of stating the main result of this paper, namely a bound on the quantity appearing in the right hand side of \eqref{kempbound} in terms of the fourth moment, which then leads to a quantitative Fourth Moment Theorem for multiple Wigner integrals.

\begin{theorem}
\label{maintheorem1}
For $n \in \mathbb{N}$, let $F = I_n\left(f \right) $ be a Wigner integral of
order $n$ with $f \in L^2\left(\mathbb{R}_{+}^{n} \right) $ symmetric and such
that $\norm{f}_{L^2\left(\mathbb{R}_{+}^{n} \right) }^{2}=1$. Then, it holds that
\begin{equation}
\label{mainbound1}
\varphi \otimes \varphi\left( \left | \int_{\mathbb{R}_{+}}\nabla_s\left(
      N_0^{-1} F\right) \sharp \left(\nabla_s F \right)^{*}  ds- 1\otimes 1
  \right |^2\right) \leq C_n \Big(\varphi\left( F^4\right) -2\Big), 
\end{equation}
where $C_n = \frac{1}{n^2}\max\left\lbrace P_{n}\left( \lfloor u_0
    \rfloor\right), P_{n}\left( \lceil u_0 \rceil\right)  \right\rbrace $ with 
\begin{align}
\label{expressionofu0}
P_{n}(u) &= \frac{1}{3}u^2(n-u+1)\left(2(n-u)^2 + 4(n-u) + 3 \right),\notag \\
u_0 &= \frac{1}{5}\left(4 (n+1) -\frac{r(n)}{\sqrt[3]{4}} -\frac{2 n^2+ 4 n -3}{ \sqrt[3]{2} r(n)}\right) \\
\intertext{and} r(n) &= \sqrt[3]{4 n^3+12 n^2+5 \sqrt{2} \sqrt{4 n^4+16 n^3+20 n^2+8 n+5}+22 n+14} \notag.
\end{align}
\end{theorem}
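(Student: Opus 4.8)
The plan is to expand the bracketed operator via the biproduct formula, peel off its constant part (which will turn out to be exactly $1\otimes 1$), control the $L^2(\varphi\otimes\varphi)$-norm of what remains through the Wigner bisometry, and reduce everything to a combinatorial identity plus a one-variable calculus argument. Concretely, write $F = I_n(f)$ with $\|f\|_{L^2(\R_{+}^n)}^2 = 1$ and put $f_s := f(s,\cdot)$. Using that $f$ is real and symmetric, I would first record $\nabla_s(N_0^{-1}F) = \frac1n\sum_{k=1}^n I_{k-1}\otimes I_{n-k}(f_s^{(k)})$ and $(\nabla_s F)^* = \sum_{l=1}^n I_{l-1}\otimes I_{n-l}(f_s^{(l)})$ (each $f_s^{(k)}$ equals $f_s$ as a function of $n-1$ variables and is fixed by the adjoint), apply Theorem~\ref{productformulabi} to every product of summands, and integrate in $s$. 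By Lemma~\ref{propertiesofconts}(i)--(ii) each resulting bicontraction $f_s^{(k)}\conts{p,r}f_s^{(l)}$ depends on $(p,r)$ only through $q := p+r$ and, identified with an element of $L^2(\R_{+}^{2n-2-2q})$, equals $f_s\cont{q}f_s$; integrating gives $\int_{\R_{+}}f_s\cont q f_s\,ds = f\cont{q+1}f$, independent of $k,l,p,r$. The only constant ($I_0\otimes I_0$) terms come from $k=l,\ p=k-1,\ r=n-k$, and by Lemma~\ref{propertiesofconts}(iv) together with $\int_{\R_{+}}\|f_s\|^2\,ds = 1$ they add up to precisely $1\otimes 1$.

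Hence $X := \int_{\R_{+}}\nabla_s(N_0^{-1}F)\sharp(\nabla_s F)^*\,ds - 1\otimes 1$ is a finite sum, over bi-orders $(a,b)\ne(0,0)$ with $a+b$ even, of terms $\frac1n N_{a,b}\,I_a\otimes I_b(h_{a,b})$, where $j := n-(a+b)/2$, $h_{a,b}$ is the symmetric kernel $f\cont{j}f$ viewed in $L^2(\R_{+}^a)\otimes L^2(\R_{+}^b)$, and $N_{a,b}$ counts the admissible quadruples $(k,l,p,r)$ producing that bi-order. By orthogonality of distinct bi-orders, the Wigner bisometry~\eqref{wignerbisometry} and traciality of $\varphi\otimes\varphi$, this gives $\varphi\otimes\varphi(|X|^2) = \frac1{n^2}\sum_{j=1}^{n-1}D_j\,\|f\cont j f\|^2$ with $D_j := \sum_{a+b = 2(n-j)}N_{a,b}^2$. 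Separately, applying~\eqref{productformula} to $F^2$ and using~\eqref{wigneriso} gives $\varphi(F^4) = \|F^2\|_2^2 = \sum_{p=0}^n\|f\cont p f\|^2 = 2 + \sum_{j=1}^{n-1}\|f\cont j f\|^2$ (the end terms each contribute $\|f\|^4 = 1$), i.e.\ $\varphi(F^4) - 2 = \sum_{j=1}^{n-1}\|f\cont j f\|^2$. Thus $\varphi\otimes\varphi(|X|^2)\le\frac1{n^2}\big(\max_{1\le j\le n-1}D_j\big)(\varphi(F^4) - 2)$, and it remains to show $D_j = P_n(j)$ and to evaluate $\max_{1\le j\le n-1}P_n(j)$.

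For $D_j = P_n(j)$ I would count $N_{a,b}$ directly: fixing a bi-order with $a+b = 2(n-j)$ forces $p+r = j-1$ (so $j$ choices of $p$), each choice fixes $k+l = a+2+2p$, and the constraints $0\le p\le(k-1)\wedge(l-1)$, $0\le r\le(n-k)\wedge(n-l)$ confine $k$ to an interval of length $c_{a,b} := \min(a+1,\,n-j+1) - \max(0,\,a-(n-j))$, which one checks is independent of $p$. So $N_{a,b} = j\,c_{a,b}$, and as $a$ runs over $0,\dots,2(n-j)$ the $c_{a,b}$ form the tent sequence $1,2,\dots,n-j,n-j+1,n-j,\dots,2,1$; summing its squares via $\sum_{i=1}^v i^2 = \tfrac16 v(v+1)(2v+1)$ yields $D_j = \tfrac{j^2}{3}(n-j+1)\big(2(n-j)^2+4(n-j)+3\big) = P_n(j)$. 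For the maximum, note $P_n(u) = \tfrac13 u^2 Q(u)$ with $Q$ a cubic, so the critical equation $2Q(u) + uQ'(u) = 0$ is itself a cubic; its unique real root in $(0,n+1)$ is the $u_0$ of the statement (Cardano's formula, $r(n)$ being the radical). Since $P_n$ vanishes at $0$ and $n+1$ and is positive in between, it is unimodal there, so $\max_{1\le j\le n-1}P_n(j) = \max\{P_n(\lfloor u_0\rfloor), P_n(\lceil u_0\rceil)\}$, with a short separate check for the few small $n$ where $u_0\notin[1,n-1]$ (there the out-of-range value is the smaller one, so the identity persists). This gives~\eqref{mainbound1} with $C_n$ as stated.

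The main obstacle is the exact combinatorial identity $D_j = P_n(j)$: one must prove that the interval of admissible gradient positions $(k,l)$ has length independent of the splitting $p+r = j-1$ and then carry out the square-sum exactly, not merely up to lower-order terms in $n$ — this is the ``fine combinatorial analysis'' announced in the introduction. Confirming that the Cardano root $u_0$ is the interior maximizer of $P_n$ and justifying the floor/ceiling reduction is more routine, but still requires care with the explicit algebra.
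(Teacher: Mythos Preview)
Your proposal is correct and follows essentially the same route as the paper's proof: expand via the biproduct formula, use full symmetry and Lemma~\ref{propertiesofconts} to reduce every bicontraction to $f\cont{p+r+1}f$, strip off the constant part, apply the Wigner bisometry to turn the $L^2(\varphi\otimes\varphi)$-norm into a weighted sum of $\|f\cont{j}f\|^2$, and then bound by the maximum weight times $\varphi(F^4)-2$. The only cosmetic difference is in the counting: the paper first reindexes the quadruple sum (your $(k,l)$ become $(k-p-1,l-p-1)$) so that the number of admissible pairs is visibly $\min(v+1,n-u+1)-\max(0,v-(n-u))$ without having to argue separately that this count is independent of the choice of $p$ within $p+r=j-1$; you instead verify that independence directly in the original coordinates. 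Both lead to the same tent sequence and the same closed form $P_n(j)$, and your treatment of the maximizer via Cardano is exactly what the paper's ``straightforward analysis'' hides.
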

\begin{proof}
In the following we will use the shorthand $f^{(k)}_s$ to denote the function
given by
\begin{equation*}
  f_s^{(k)}(x_1,\dots,x_{n-1}) =
f(x_1,\dots,x_{k-1},s,x_{k+1},\dots,x_n).
\end{equation*}
 Observe that 
\begin{align*}
\int_{\mathbb{R}_{+}}\left( \nabla_s F\right)  \sharp \left(\nabla_s F
  \right)^{*}  &ds 
  \\ &= \sum_{k,q=1}^{n}\int_{\mathbb{R}_{+}} I_{k-1}\otimes
                      I_{n-k}\left(f_{s}^{(k)} \right) \sharp \left(
                      I_{q-1}\otimes I_{n-q}\left(f_{s}^{(q)}
                      \right)\right)^{*} ds \\ 
\\ &= \sum_{k,q=1}^{n}\int_{\mathbb{R}_{+}} I_{k-1}\otimes
    I_{n-k}\left(f_{s}^{(k)} \right) \sharp I_{q-1}\otimes
    I_{n-q}\left(f_{s}^{(q)} \right) ds, 
\end{align*}
where the last equality follows from the full symmetry of the function
$f$. Using the product formula for bi-integrals proven in Theorem
\ref{productformulabi} yields 
\begin{multline*}
\int_{\mathbb{R}_{+}}\left( \nabla_s F\right)  \sharp \left(\nabla_s F \right)^{*}  ds \\ = \sum_{k,q=1}^{n}\int_{\mathbb{R}_{+}}\sum_{p=0}^{\left( k \wedge q\right)  -1}\sum_{r=0}^{n - \left( k \vee q\right)  }I_{k+q -2p-2} \otimes I_{2n -k-q -2r}\left(f_s^{(k)} \conts{p,r} f_s^{(q)}\right)ds,
\end{multline*}
 and by a Fubini argument one gets
\begin{multline*}
\int_{\mathbb{R}_{+}}\left( \nabla_s F\right)  \sharp \left(\nabla_s F \right)^{*}  ds \\= \sum_{k,q=1}^{n}\sum_{p=0}^{\left( k \wedge q\right)  -1}\sum_{r=0}^{n - \left( k \vee q\right)  }I_{k+q -2p-2} \otimes I_{2n -k-q -2r}\left(\int_{\mathbb{R}_{+}} f_s^{(k)} \conts{p,r} f_s^{(q)} ds\right) .
\end{multline*}
The full symmetry of $f$ implies that $f_s^{(k)} = f_s^{(q)}$ for any $1 \leq k,q \leq n$, which together with Lemma \ref{propertiesofconts} yields $\int_{\mathbb{R}_{+}} f_s^{(k)} \conts{p,r} f_s^{(q)} ds = f \conts{p+r+1} f$. Hence,
\begin{equation}
\label{hilbertnormofmallderivative}
\int_{\mathbb{R}_{+}}\left( \nabla_s F\right)  \sharp \left(\nabla_s F \right)^{*}  ds = \sum_{k,q=1}^{n}\sum_{p=0}^{\left( k \wedge q\right)  -1}\sum_{r=0}^{n - \left( k \vee q\right)  }I_{k+q -2p-2} \otimes I_{2n -k-q -2r}\left(f \conts{p+r+1} f \right).
\end{equation}
Exactly those summands $I_{k+q -2p-2} \otimes I_{2n -k-q -2r}\left(f
  \conts{p+r+1} f \right)$ for which $k+q -2p-2 =0$ and $2n -k-q -2r =0$ yield
the constant term $\norm{f}_{L^2 \left( \mathbb{R}_+^n \right)}^{2} \cdot 1 \otimes 1$
(i.e. a constant in $L^2 \left( \mathbb{R}_+ \right)\otimes L^2 \left( \mathbb{R}_+ \right)$). These conditions,
along with the ranges of summation, imply that $k=q$ and $p+r+1 = n$. Therefore,
fixing $k$, for which we have $n$ possibilities, fixes the other three indices
$q$,$p$ and $r$ to take the values $k$, $k-1$ and $n-k$, respectively. Recalling
that $\norm{f}_{L^2 \left( \mathbb{R}_+^n \right)}^{2} = 1$,
\eqref{hilbertnormofmallderivative} can thus be rewritten as
\begin{multline*}
\int_{\mathbb{R}_{+}}\left( \nabla_s F\right)  \sharp \left(\nabla_s F
  \right)^{*}  ds \\ = n \cdot 1 \otimes 1  + \sum_{k,q=1}^{n}\sum_{p=0}^{\left( k
                      \wedge q\right) -1 }\sum_{r=0}^{n - \left( k \vee q\right)
                      }\mathds{1}_{\left\lbrace n-1-p-r >0\right\rbrace }
                      \\ \times
                      I_{k+q -2p-2} \otimes I_{2n -k-q -2r}\left(f
                      \conts{p+r+1} f \right),
\end{multline*}
which, by using that $N_{0}^{-1}F = \frac{1}{n}F$, gives
\begin{align}
  \notag
  \int_{\mathbb{R}_{+}}\nabla_s&\left( N_0^{-1} F\right) \sharp \left(\nabla_s F
                               \right)^{*}  ds  - 1 \otimes 1
  \\ &= \frac{1}{n}\int_{\mathbb{R}_{+}}\left(
                                     \nabla_s F\right)  \sharp \left(\nabla_s F
       \right)^{*}  ds - 1 \otimes 1 \nonumber
  \\ &= \notag
       \frac{1}{n}\sum_{k,q=1}^{n}\sum_{p=0}^{\left( k \wedge q\right) -1
       }\sum_{r=0}^{n - \left( k \vee q\right)  }\mathds{1}_{\left\lbrace
       n-1-p-r >0\right\rbrace } 
  \\ & \qquad \qquad \qquad \qquad \qquad  I_{k+q -2p-2} \otimes I_{2n -k-q
       -2r}\left(f \conts{p+r+1} f \right) \nonumber
  \\ &=
       \label{combinatoriallyconvieninentexpression}
       \frac{1}{n}\sum_{p,r=0}^{n-1}\sum_{k,q=0}^{n-1-p-r}\mathds{1}_{\left\lbrace
       n-1-p-r >0\right\rbrace }\cdot I_{k+q} \otimes
       I_{2(n-1-p-r)-k-q}\left(f \conts{p+r+1} f \right). 
\end{align}
Grouping all occuring bi-integrals by the order of the contraction, one arrives
at 
\begin{multline}
\label{expressionwiththeconstants}
\int_{\mathbb{R}_{+}}\nabla_s\left( N_0^{-1} F\right) \sharp \left(\nabla_s F
\right)^{*}  ds  - 1 \otimes 1 
\\ = \frac{1}{n}\sum_{u=1}^{n-1}\sum_{v=0}^{2(n-u)}
c_{u,v} I_{v} \otimes I_{2(n-u)-v}\left(f \conts{u} f \right), 
\end{multline}
where the $c_{u,v}$ are positive constants depending solely on $u$ and $v$.
Taking the trace of the square of \eqref{expressionwiththeconstants} and using
the Wigner bisometry \eqref{wignerbisometry} yields
\begin{align*}
\varphi\otimes\varphi\left(\left | \int_{\mathbb{R}_{+}}\nabla_s\left( N_0^{-1}
  F\right)
  \right. \right. &\sharp \left. \left. \left(\nabla_s F \right)^{*}  ds - 1 \otimes 1 \right |^{2}
  \right)
  \\  &= \frac{1}{n^2}\sum_{u=1}^{n-1}\sum_{v=0}^{2(n-u)} c_{u,v}^2 \norm{f
            \conts{u} f}_{L^2 \left( \mathbb{R}_+^{2n-2u} \right)}^{2}
  \\ &\leq 
         \frac{1 }{n^2}\max_{1 \leq u \leq n-1}\left\lbrace  \sum_{v=0}^{2(n-u)}
         c_{u,v}^2 \right\rbrace  \sum_{u=1}^{n-1} \norm{f \conts{u}
         f}_{L^2 \left( \mathbb{R}_+^{2n-2u} \right)}^{2}. 
\end{align*}
As is well known, 
\begin{equation*}
\sum_{u=1}^{n-1} \norm{f \conts{u} f}_{L^2 \left( \mathbb{R}_+^{2n-2u} \right)}^{2} = \varphi\left( F^4\right) -2,
\end{equation*}
so that it only remains to evaluate the maximum. To this end, the constants
$c_{u,v}$ will be computed explicitly. By carefully comparing
\eqref{combinatoriallyconvieninentexpression}
with~\eqref{expressionwiththeconstants}, one sees that $c_{u,v}$ is
given by the cardinality of the set of all quadruples $(p,r,k,q)$ satisfying the
following conditions:
\begin{align*}
&0 \leq p,r \leq n-1; &
&0 \leq k,q \leq n-1-p-r;\\
&k+q  = v;&
&p+r = u-1;\\
&p+r <n-1.&&
\end{align*}
By reindexing the second sum in~\eqref{combinatoriallyconvieninentexpression}
via the transformation $(k',q') = (n-1-p-r-k,n-1-p-r-q)$, we see that $c_{u,v} =
c_{u, 2(n-u)-v}$, thus only the constants $c_{u,v}$ for which $v \leq n-u$ need
to be computed explicitly. Fix $u$ and $v$. Then, there are $u$ couples $(p,r)$ 
satisfying $p+r = u-1$, namely $(0, u-1), (1,u-2), \ldots , (u-1,0)$. Likewise, there
are $v+1$ couples $(k,q)$ satisfying $k+q = v$. Therefore,
\begin{equation*}
c_{u,v} = \begin{cases} u(v+1) & \quad \text{if $v \leq n-u$} \\ u(2(n-u) - v
  +1) & \quad \text{if $v > n-u$}. \end{cases} 
\end{equation*}
This yields
\begin{align*}
\sum_{v=0}^{2(n-u)} c_{u,v}^2 &= \sum_{v=0}^{n-u} u^2(v+1)^2 +
                                \sum_{v=n-u+1}^{2(n-u)} u^2(2(n-u) - v +1)^2
  \\ &=
       \sum_{v=0}^{n-u} u^2(v+1)^2 + \sum_{v=0}^{n-u-1} u^2(v +1)^2
  \\ &=
       \frac{1}{3}u^2\left( n-u\right)(n-u+1)(2(n-u)+1) + u^2(n-u+1)^2
  \\ &=
       P_{n}(u).
\end{align*}
Straightforward analysis shows that the polynomial $P_{n}$ has exactly one
maximum in the interval $\left(  1,n-1 \right)$ attained at $u_0$ as defined in
\eqref{expressionofu0}. Therefore, to maximize $P_n$, one has to select the
closest integer to $u_0$.
\end{proof}
 Combining Theorem \ref{maintheorem1} with the bound appearing in
 \eqref{kempbound} and applying the Cauchy-Schwarz inequality immediately yields
 the following quantitative free Fourth Moment Theorem.
\begin{corollary}
\label{propquant1}
Let $n \geq 2$ be a natural number and $F = I_n\left(f \right) $, where $f$ is a
symmetric function in $L^2\left(\mathbb{R}_{+}^{n} \right) $ such that
$\norm{f}_{L^2\left(\mathbb{R}_{+}^{n} \right) }^{2} =1$. Let $S$ be standard
semicircular random variable. Then it holds that, 
\begin{equation*}
d_{\mathcal{C}_2}(F,S) \leq \frac{\sqrt{C_n}}{2}\sqrt{\varphi\left( F^4\right) -2},
\end{equation*}
where $C_n$ is the constant appearing in Theorem \ref{maintheorem1}.
\end{corollary}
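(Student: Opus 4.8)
The plan is to obtain Corollary~\ref{propquant1} by composing two estimates that are already at our disposal. On the one hand, the bound~\eqref{kempbound} from~\cite{kemp_wigner_2012} controls $d_{\mathcal{C}_2}(F,S)$ by the $L^1(\varphi\otimes\varphi)$-norm of the operator
\begin{equation*}
Y := \int_{\mathbb{R}_{+}}\nabla_s\left( N_0^{-1} F\right) \sharp \left(\nabla_s F \right)^{*}  ds - 1\otimes 1.
\end{equation*}
On the other hand, Theorem~\ref{maintheorem1} controls the $L^2(\varphi\otimes\varphi)$-norm of the \emph{same} operator $Y$ by the fourth-moment defect $\varphi(F^4)-2$. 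The only missing ingredient is a single application of the Cauchy--Schwarz inequality for the tracial state $\varphi\otimes\varphi$, which upgrades the $L^1$-control into $L^2$-control.

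Concretely, I would first verify that the hypotheses of the theorem giving~\eqref{kempbound} are met by $F=I_n(f)$: the function $f$ being symmetric is in particular mirror-symmetric, so $F$ is self-adjoint; $F$ is a single multiple Wigner integral of finite order, hence in the domain of $\nabla$; and $\varphi(F)=\varphi(I_n(f))=0$ since $n\geq 2$. (In addition $\varphi(F^2)=\norm{f}_{L^2\left(\mathbb{R}_{+}^{n}\right)}^2=1$ by the Wigner isometry~\eqref{wigneriso}, matching the unit variance of the standard semicircular $S$.) Hence $d_{\mathcal{C}_2}(F,S)\leq \frac{1}{2}\,\varphi\otimes\varphi\left(\left|Y\right|\right)$. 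I would then apply Cauchy--Schwarz for the positive unital functional $\varphi\otimes\varphi$, taking $A=\left|Y\right|$ (which is self-adjoint by functional calculus) and $B=1\otimes 1$: since $\varphi\otimes\varphi(1\otimes 1)=1$, one gets
\begin{equation*}
\varphi\otimes\varphi\left(\left|Y\right|\right)=\varphi\otimes\varphi\left(AB^{*}\right)\leq \left(\varphi\otimes\varphi\left(AA^{*}\right)\right)^{1/2}\left(\varphi\otimes\varphi\left(BB^{*}\right)\right)^{1/2}=\left(\varphi\otimes\varphi\left(\left|Y\right|^2\right)\right)^{1/2}.
\end{equation*}
Finally, Theorem~\ref{maintheorem1} applies verbatim (its normalization hypothesis $\norm{f}_{L^2\left(\mathbb{R}_{+}^{n}\right)}^2=1$ is precisely the one assumed here) and yields $\varphi\otimes\varphi\left(\left|Y\right|^2\right)\leq C_n\left(\varphi(F^4)-2\right)$. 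Chaining the three inequalities produces
\begin{equation*}
d_{\mathcal{C}_2}(F,S)\leq \frac{1}{2}\sqrt{C_n\left(\varphi(F^4)-2\right)}=\frac{\sqrt{C_n}}{2}\sqrt{\varphi(F^4)-2},
\end{equation*}
which is the claim.

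I do not foresee any genuine obstacle: the corollary is a purely formal consequence of results already proved. The only points that deserve a word of justification are the routine check of the hypotheses of~\eqref{kempbound} and the legitimacy of the operator-level Cauchy--Schwarz step, the latter being immediate because $\left(\mathscr{A}\otimes\mathscr{A},\varphi\otimes\varphi\right)$ is itself a tracial $W^{*}$-probability space. As a consistency check, one may note that for $n=2$ the resulting bound coincides with the second-order estimate~\eqref{I2bound} of~\cite{kemp_wigner_2012}, which is thus recovered as a special case.
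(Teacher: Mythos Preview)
Your proposal is correct and follows exactly the paper's own argument: the authors state that the corollary is obtained by ``combining Theorem~\ref{maintheorem1} with the bound appearing in~\eqref{kempbound} and applying the Cauchy--Schwarz inequality.'' Your additional care in verifying the hypotheses of~\eqref{kempbound} and spelling out the Cauchy--Schwarz step in the tracial setting is welcome but goes slightly beyond what the paper makes explicit.
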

\begin{remark}\hfill
\label{bigremark}
\begin{enumerate}[1.]
\item It holds that $C_2 = \frac{3}{2}$, so that Corollary \ref{propquant1} becomes
\begin{align*}
d_{\mathcal{C}_2}\left( I_{2}(f),S\right) & \leq \frac{1}{2}\sqrt{\frac{3}{2}}\sqrt{\varphi\left( I_{2}(f)^4\right) -2},
\intertext{which is precisely the conclusion of \cite[Corollary 1.12]{kemp_wigner_2012}. The next few values of $C_n$ are given by $C_3 = 2$, $C_4 = \frac{19}{4}$ yielding}
d_{\mathcal{C}_2}\left( I_{3}(f),S\right) & \leq \frac{1}{\sqrt{2}}\sqrt{\varphi\left( I_{3}(f)^4\right) -2}, \\
\intertext{and}
d_{\mathcal{C}_2}\left( I_{4}(f),S\right) & \leq \frac{\sqrt{19}}{4}\sqrt{\varphi\left( I_{4}(f)^4\right) -2},
\end{align*}
where $f$ is of course always chosen appropriately to be symmetric and an element of $L^2\left( \mathbb{R}_{+}^{n}\right)$ for each $n=2,3,4$. 
\item In general, a straightforward analysis shows that $C_n$ grows with $n$. In
  the commutative case, when bounding the distance between a multiple Wiener
  integral of any order and the standard Gaussian distribution by means of the
  fourth moment, the constants appearing in the bounds do not depend on the
  order of the multiple integral (see for
  example~\cite{nourdin_steins_2009}). If such a dimension-free bound also holds
  in the free case is not known and this question is left for future
  research.
\item As stated in the introduction, convergence of the fourth moment to $2$ also
  implies convergence of multiple integral with mirror-symmetric kernels
  towards the semicircular distribution. As one needs the function $f$ to be
  symmetric in Theorem \ref{maintheorem1}, it is natural to ask if the
  bound~\eqref{mainbound1} also holds for mirror-symmetric kernels. As the
  following counterexample shows, this is not true. Divide $\left[0,1 \right]$
  into $N$ 
  intervals $I_1,I_2,\ldots,I_N$ of equal length $\frac{1}{N}$ and define the
  function $$f_N(x_1,x_2,x_3) = \sqrt{N}\sum_{k=1}^{N}\mathds{1}_{I_k \times
    I_k}(x_1,x_3)$$ on $\left[ 0,1\right]^3$. Observe that $f_N$ is a
  mirror-symmetric function in $L^2\left( \left[0,1 \right]^3\right) $. Then,  
\begin{equation*}
I_{3}\left( f_N\right) = \sqrt{N}\sum_{k=1}^{N}I_1\left( \mathds{1}_{I_k}\right) I_1\left( 1\right) I_1\left( \mathds{1}_{I_k}\right).
\end{equation*}
It is easy to check that $\varphi\left( I_{3}(f_N)^2\right) = 1$ and
\begin{equation*}
\varphi\left( I_{3}\left( f_N\right)^4\right) -2= \norm{f_N \conts{1} f_N}_{L^2\left( \left[0,1 \right]^4\right)}^{2} +\norm{f_N \conts{2} f_N}_{L^2\left( \left[0,1 \right]^2\right)}^{2} = \frac{2}{N},
\end{equation*}
implying (by the free Fourth Moment Theorem of~\cite{kemp_wigner_2012}) that the
sequence $\left\lbrace I_{3}\left( f_N\right) \colon N \geq 1\right\rbrace $
converges in distribution to the standard semicircular law. Furthermore,
\begin{multline*}
\nabla_t I_{3}\left( f_N\right)  = \sqrt{N}\sum_{k=1}^{N}[
  \mathds{1}_{I_k}(t) \otimes I_1\left( 1\right) I_1\left(
    \mathds{1}_{I_k}\right) \\ +  I_1\left( \mathds{1}_{I_k}\right) \otimes
  I_1\left( \mathds{1}_{I_k}\right) +  I_1\left(
    \mathds{1}_{I_k}\right)I_1\left( 1\right) \otimes
  \mathds{1}_{I_k}(t)].  
\end{multline*}
As $f_N$ is a sum of products of non-negative indicator functions, the quantity 
\begin{equation}
\label{quantitylargerthan1}
\varphi \otimes \varphi\left( \left | \int_{\mathbb{R}_{+}}\nabla_s\left( N_0^{-1} I_{3}\left( f_N\right) \right) \sharp \left(\nabla_s I_{3}\left( f_N\right) \right)^{*}  ds- 1\otimes 1 \right |^2\right)
\end{equation}  
is a sum of non-negative terms. Hence, if one of these terms can be proven not
to converge to zero, the entire quantity must be bounded away from zero as
well. One of the summands appearing is $\sqrt{N}\sum_{k=1}^{N}  I_1\left(
  \mathds{1}_{I_k}\right) \otimes I_1\left( \mathds{1}_{I_k}\right)$. It holds
that  
\begin{multline*}
\left(\sqrt{N}\sum_{k=1}^{N}  I_1\left( \mathds{1}_{I_k}\right) \otimes
  I_1\left( \mathds{1}_{I_k}\right) \right) \sharp \left(\sqrt{N}\sum_{k=1}^{N}
  I_1\left( \mathds{1}_{I_k}\right) \otimes I_1\left( \mathds{1}_{I_k}\right)
\right)^{*}
\\ = N\sum_{k,q=1}^{N}I_1\left( \mathds{1}_{I_k}\right)I_1\left(
  \mathds{1}_{I_q}\right)\otimes I_1\left( \mathds{1}_{I_q}\right)I_1\left(
  \mathds{1}_{I_k}\right)
\end{multline*}
and a straightforward calculation shows that
\begin{equation*}
\varphi \otimes \varphi\left( \left| N\sum_{k,q=1}^{N}I_1\left( \mathds{1}_{I_k}\right)I_1\left( \mathds{1}_{I_q}\right)\otimes I_1\left( \mathds{1}_{I_q}\right)I_1\left( \mathds{1}_{I_k}\right)\right|^2\right) = 1 +\frac{3}{N},
\end{equation*}
which does not go to zero as $N$ goes to infinity. In total, it holds that
$\varphi\left( I_{3}\left( f_N\right)^4\right) -2 \rightarrow 0$ as $N$ goes to
infinity, but the quantity \eqref{quantitylargerthan1} is strictly greater than
1 for all $N$, hence proving that the quantity \eqref{quantitylargerthan1} can
not be controlled by the fourth moment. Therefore, Theorem \ref{maintheorem1}
can not be extended to mirror-symmetric kernels.
\end{enumerate}
\end{remark}
 In the commutative case, the classical Nualart-Ortiz-Latorre equivalence
 criterion for normal convergence of multiple Wiener integrals $F_k =
 I_{n}^{W}\left( f_k\right) $ reads  
\begin{equation*}
\int_{\mathbb{R}_{+}}\left( D_sF_k\right) ^2ds \rightarrow n \ \text{ in }\
L^2\left(\Omega\right),\ \text{as $k \rightarrow \infty$}, 
\end{equation*}
where $D$ denotes the Malliavin gradient and $\Omega$ stands for the underlying
probability space (see~\cite{nualart_central_2008}). For Wigner integrals, an
analogue of this criterion was only known to hold in the second chaos (see
\cite[Theorem 4.8]{kemp_wigner_2012}). Theorem \ref{maintheorem1} extends this
to any order of chaos. Therefore, all equivalent criteria for normal convergence
of Wiener integrals have now free analogues for convergence of Wigner integrals
towards a semicircular distribution. For the sake of completeness, we collect
these analogues in the following Theorem.
\begin{theorem}
\label{nualartortizlatorrefree}
Let $n \geq 2$ be a natural number and let $\left\lbrace f_k \colon k \geq 1
\right\rbrace $ be a sequences of symmetric functions in $L^2\left(
  \mathbb{R}_{+}^{n}\right) $ such that, for all $k \geq 1$,
$\norm{f_k}_{L^2\left( \mathbb{R}_{+}^{n}\right)} = 1$. For any $k \geq 1$,
denote $F_k = I_{n}\left(f_k\right)$. Then, the following conditions are
equivalent: 
\begin{enumerate}[(i)]
\item The sequence $\left\lbrace F_k \colon k \geq 1  \right\rbrace $ converges
  in law to the standard semicircular distribution.
\item As $k$ tends to infinity, $\varphi\left(F_k ^4 \right) \rightarrow 2$.
\item For all $1 \leq p \leq n-1$, as $k$ tends to infinity, $\norm{f_k \conts{p} f_k}_{L^2\left(\mathbb{R}_{+}^{2n-2p} \right) } \rightarrow 0$.
\item As $k$ tends to infinity, 
\begin{equation*}
\int_{\mathbb{R}_{+}}\left( \nabla_s F_k\right)  \sharp \left(\nabla_s F_k \right)^{*}ds \rightarrow n \cdot 1 \otimes 1\ \text{ in }\  L^2\left(\mathcal{S}\otimes\mathcal{S}, \varphi \otimes \varphi \right).
\end{equation*}
\end{enumerate}
\end{theorem}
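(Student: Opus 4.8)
The plan is to prove the four conditions equivalent by establishing the cycle $(i)\Rightarrow(ii)\Rightarrow(iii)\Leftrightarrow(iv)\Rightarrow(i)$. The equivalence of $(i)$, $(ii)$ and $(iii)$ is in substance the free Fourth Moment Theorem of~\cite{kemp_wigner_2012}, so the only genuinely new link is the one with $(iv)$, which can be extracted directly from the computations already carried out in the proof of Theorem~\ref{maintheorem1}.

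For $(i)\Rightarrow(ii)$ I would simply apply the definition of convergence in distribution with the polynomial $P(x)=x^4$: then $\varphi(F_k^4)\to\varphi(S^4)$, and the fourth moment of a standard semicircular variable is the Catalan number $C_2=2$. For $(ii)\Leftrightarrow(iii)$ I would invoke the identity $\varphi(F_k^4)-2=\sum_{p=1}^{n-1}\norm{f_k\conts{p}f_k}_{L^2(\mathbb{R}_{+}^{2n-2p})}^2$ already used in the proof of Theorem~\ref{maintheorem1} (which uses the normalization $\norm{f_k}=1$); being a finite sum of non-negative terms, it tends to $0$ exactly when each summand does.

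The link $(iii)\Leftrightarrow(iv)$ is where Theorem~\ref{maintheorem1} does the work. Since $N_0^{-1}F_k=\frac1n F_k$, equation~\eqref{expressionwiththeconstants} can be rewritten as
\begin{equation*}
\int_{\mathbb{R}_{+}}\left(\nabla_s F_k\right)\sharp\left(\nabla_s F_k\right)^{*}ds-n\cdot 1\otimes 1=\sum_{u=1}^{n-1}\sum_{v=0}^{2(n-u)}c_{u,v}\,I_{v}\otimes I_{2(n-u)-v}\left(f_k\conts{u}f_k\right),
\end{equation*}
with the $c_{u,v}$ the strictly positive combinatorial constants computed there. Because bi-integrals of different orders are orthogonal by the Wigner bisometry~\eqref{wignerbisometry}, squaring and applying $\varphi\otimes\varphi$ gives $\sum_{u=1}^{n-1}\sum_{v=0}^{2(n-u)}c_{u,v}^2\norm{f_k\conts{u}f_k}_{L^2(\mathbb{R}_{+}^{2n-2u})}^2$. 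This finite sum of non-negative terms, each carrying a strictly positive weight, tends to $0$ if and only if every $\norm{f_k\conts{u}f_k}$ does; that is, $(iii)$ holds if and only if $\int(\nabla_s F_k)\sharp(\nabla_s F_k)^{*}ds$ converges to $n\cdot 1\otimes 1$ in $L^2(\mathscr{A}\otimes\mathscr{A},\varphi\otimes\varphi)$, which is exactly $(iv)$.

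Finally, for $(iv)\Rightarrow(i)$ I would close the loop using the bound~\eqref{kempbound}: since $F_k=I_n(f_k)$ with $n\geq 2$ is self-adjoint, centred, and in the domain of $\nabla$, combining~\eqref{kempbound} with the Cauchy--Schwarz inequality and $N_0^{-1}F_k=\frac1n F_k$ yields
\begin{equation*}
d_{\mathcal{C}_2}(F_k,S)\leq\frac{1}{2n}\left(\varphi\otimes\varphi\left(\left|\int_{\mathbb{R}_{+}}\left(\nabla_s F_k\right)\sharp\left(\nabla_s F_k\right)^{*}ds-n\cdot 1\otimes 1\right|^2\right)\right)^{1/2},
\end{equation*}
so under $(iv)$ the right-hand side vanishes in the limit, and since $d_{\mathcal{C}_2}$ metrizes convergence in law we obtain $(i)$. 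There is no serious obstacle here: the analytic content is already contained in Theorem~\ref{maintheorem1}, and the only points demanding a little care are the bookkeeping of the factor $1/n$ coming from the pseudo-inverse of the number operator and, in the direction $(iv)\Rightarrow(iii)$, the observation that strict positivity of the constants $c_{u,v}$ is precisely what allows one to pass from $L^2$-convergence of the whole sum to convergence of each contraction norm separately.
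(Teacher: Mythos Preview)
Your proof is correct and follows essentially the same route as the paper: the equivalences $(i)\Leftrightarrow(ii)\Leftrightarrow(iii)$ and $(iv)\Rightarrow(i)$ are attributed to~\cite{kemp_wigner_2012}, and the new link with $(iv)$ is extracted from the chaos expansion~\eqref{expressionwiththeconstants} computed in the proof of Theorem~\ref{maintheorem1}. The one minor difference is that the paper only records the implication $(ii)\Rightarrow(iv)$ via the bound of Theorem~\ref{maintheorem1}, whereas you note in addition that the strict positivity of the constants $c_{u,v}$ yields the direct equivalence $(iii)\Leftrightarrow(iv)$; this is a harmless (and slightly cleaner) refinement of the same argument.
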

\begin{proof}
The equivalences $(i)\Leftrightarrow (ii) \Leftrightarrow (iii)$ and the
implication $(iv) \Rightarrow (i)$ follow from \cite[Theorem 1.6, Theorem
1.10]{kemp_wigner_2012}. The missing implication $(ii) \Rightarrow (iv)$ follows
from the main result of this section, namely Theorem \ref{maintheorem1}.
\end{proof}

\section{Quantifying the free Breuer-Major theorem}
\label{breuermajorsection}
 Our main results can be used to provide Berry-Esseen bounds for a free version
 of the Breuer-Major theorem (see \cite{kemp_wigner_2012}) for the free
 fractional Brownian motion. This can be regarded as a free analog of
 \cite[Theorem 4.1]{nourdin_steins_2009}. The free fractional
 Brownian motion $S^{H}$ with index $H \in \left(-1,1 \right) $ is defined as a
 centered semicircular process with covariance function
\begin{equation*}
\varphi\left(S_{t}^{H}S_{s}^{H}\right) = \frac{1}{2}\left( t^{2H} + s^{2H} - \vert t-s \vert^{2H}\right).
\end{equation*}
 As is well-known (see for example \cite{biane_stochastic_1998} or
 \cite{nica_lectures_2006}), the orthogonal polynomials associated to the
 semicircular distribution are the Chebyshev polynomials $U_n$ of the second kind
 defined on $\left[-2,2 \right]$ by the recurrence relations $U_{0}(x)= 1$,
 $U_{1}(x) = x$, and for $n \geq 2$,
\begin{equation*}
U_{n+1}(x) = xU_{n}(x)- U_{n-1}(x).
\end{equation*}
 For $n\in \mathbb{N}$, define the increment sequence $\left\lbrace X_k =
   S_{k+1}^{H} - S_{k}^{H} \colon k \geq 0\right\rbrace $. Straightforward
 calculations show that the autocovariance function $\rho_H(k)$ is given by
\begin{equation*}
\rho_{H}(k) = \varphi(X_0 X_{k}) = \frac{1}{2}\left( \vert k+1
  \vert^{2H} + \vert k-1 \vert^{2H} - 2\vert k \vert^{2H}\right).
\end{equation*}  
Furthermore, define $\left\lbrace V_m \colon m\geq 1\right\rbrace $ as
\begin{equation*}
V_m = \frac{1}{\sqrt{m}}\sum_{k=0}^{m-1}U_n\left(X_k \right).
\end{equation*}
With these definitions in place, we can now state the announced Berry-Esseen bounds.
\begin{theorem}
With the above notation prevailing, suppose that there exists an integer $n \geq
1$ such that $\sigma^2 = \sum_{k \in \mathbb{Z}} \left | \rho_H(k) \right | ^n <
\infty$. Then, there exists a positive constant $C_{n,H}$ such that
\begin{equation*}
d_{\mathcal{C}_2}\left(\frac{V_m}{\sigma}, \mathcal{S}(0,1) \right)  \leq
C_{n,H}m^{\alpha\left(n,H \right) }, 
\end{equation*}
where the function $\alpha\left( n,H\right) $ is given by 
\begin{equation*}
\alpha\left(n,H \right) = \begin{cases} m^{-\frac{1}{2}} & \text{if}\ H \in
  \left(0, \frac{1}{2} \right], \\ m^{H-1} & \text{if}\ H \in \left[\frac{1}{2},
    \frac{2n-3}{2n-2} \right], \\ m^{nH-n+\frac{1}{2}} & \text{if}\ H \in
  \left[\frac{2n-3}{2n-2},  \frac{2n-1}{2n}\right). \end{cases} 
\end{equation*}
\end{theorem}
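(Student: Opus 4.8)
We deduce the bound from Corollary~\ref{propquant1}, or rather from a slightly un‑normalised version of Theorem~\ref{maintheorem1}, after realising $V_m$ as a symmetric Wigner integral of order $n$; the scheme parallels the commutative Breuer--Major Berry--Esseen estimate of~\cite[Theorem~4.1]{nourdin_steins_2009}. Since $S^H$ is a centered semicircular process, there exist a free Brownian motion and real‑valued functions $e_k \in L^2\bra{\R_+}$, $k \ge 0$, with $X_k = I_1\bra{e_k}$, $\langle e_j, e_k\rangle_{L^2\bra{\R_+}} = \rho_H(k-j)$ and, in particular, $\norm{e_k}^2 = \rho_H(0) = 1$; as only the law of $V_m$ enters $d_{\mathcal C_2}$, we may work in this model. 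An induction on the product formula~\eqref{productformula}, matched against the recursion defining the Chebyshev polynomials $U_n$, gives $U_n\bra{I_1(e)} = I_n\bra{e^{\otimes n}}$ for any unit vector $e$, so that
\begin{equation*}
V_m = I_n\bra{f_m}, \qquad f_m = \frac{1}{\sqrt m}\sum_{k=0}^{m-1} e_k^{\otimes n},
\end{equation*}
with $f_m$ real‑valued and fully symmetric.

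Next I would record the relevant moment identities. Using $e_j^{\otimes n} \conts{p} e_k^{\otimes n} = \rho_H(k-j)^p\, e_j^{\otimes(n-p)} \otimes e_k^{\otimes(n-p)}$, one gets $\sigma_m^2 := \varphi\bra{V_m^2} = \norm{f_m}^2 = \frac1m \sum_{j,k=0}^{m-1}\rho_H(k-j)^n$ and, for $1 \le p \le n-1$,
\begin{equation*}
\norm{f_m \conts{p} f_m}^2 = \frac{1}{m^2}\sum_{j,k,j',k'=0}^{m-1}\rho_H(j-k)^p\, \rho_H(j'-k')^p\, \rho_H(j-j')^{n-p}\, \rho_H(k-k')^{n-p}.
\end{equation*}
Running the proof of Theorem~\ref{maintheorem1} without imposing $\norm{f}^2 = 1$ (the full contraction then contributes the constant $n\norm{f}^2\, 1\otimes 1$, while every remaining bi‑integral stays orthogonal to $1\otimes 1$), and inserting the result into~\eqref{kempbound} together with the Cauchy--Schwarz inequality for the state $\varphi\otimes\varphi$, yields for every self‑adjoint $F = I_n(f)$ with symmetric $f$
\begin{equation*}
d_{\mathcal C_2}\bra{F,\mathcal S(0,1)} \le \frac12 \sqrt{\bra{\norm{f}^2 - 1}^2 + C_n\sum_{p=1}^{n-1}\norm{f \conts{p} f}^2}.
\end{equation*}
Applied to $F = V_m/\sigma$, this reduces the theorem to proving $\abs{\sigma_m^2 - \sigma^2} = O\bra{m^{\alpha(n,H)}}$ and $\norm{f_m \conts{p} f_m}^2 = O\bra{m^{2\alpha(n,H)}}$ for each $p$.

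For these estimates I would use the decay $\rho_H(k) = O\bra{\abs{k}^{2H-2}}$ as $\abs{k} \to \infty$, which gives the elementary partial‑sum bound $\sum_{\abs{k} < Cm}\abs{\rho_H(k)}^\ell = O\bra{m^{(\ell(2H-2)+1)^{+}}}$, up to a logarithmic factor when $\ell(2H-2) + 1 = 0$. The difference $\sigma_m^2 - \sigma^2 = -\frac1m\sum_{\abs{k} < m}\abs{k}\, \rho_H(k)^n - \sum_{\abs{k} \ge m}\rho_H(k)^n$ is controlled directly by this bound. For the contraction norms, for each choice of the differences $j-k$, $j-j'$, $k-k'$ there are at most $m$ admissible quadruples $(j,k,j',k')$, so a linear change of the summation indices brings the resulting bound to the separable form $\norm{f_m \conts{p} f_m}^2 \le \frac1m\sum_{\abs{v} < Cm} g_{m,p}(v)^2$ with $g_{m,p}(v) = \sum_{\abs{u} < Cm}\abs{\rho_H(u)}^p\abs{\rho_H(u-v)}^{n-p}$; H\"older with the conjugate exponents $n/p$ and $n/(n-p)$ bounds $g_{m,p}(v) \le \sum_{k}\abs{\rho_H(k)}^n$ uniformly in $v$, while $\sum_{\abs{v} < Cm} g_{m,p}(v) \le \bra{\sum_{\abs{u} < Cm}\abs{\rho_H(u)}^p}\bra{\sum_{\abs{w} < Cm}\abs{\rho_H(w)}^{n-p}}$. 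Feeding in the partial‑sum bound and separating the three ranges of $H$ produces the exponent $2\alpha(n,H)$ in each case; summing over the finitely many $p$ and combining with the reduction of the previous paragraph gives $d_{\mathcal C_2}\bra{V_m/\sigma, \mathcal S(0,1)} = O\bra{m^{\alpha(n,H)}}$.

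The main obstacle is this last step in the borderline regime $H \in \big[\tfrac{2n-3}{2n-2}, \tfrac{2n-1}{2n}\big)$: here several of the series $\sum_k\abs{\rho_H(k)}^\ell$ diverge, so the growth rates of the truncated sums must be tracked carefully through the four‑fold sum in order to recover the sharp exponent $nH - n + \tfrac12$ rather than a cruder one obtained by decoupling the three differences; this is the free analogue of the commutative computation in~\cite{nourdin_steins_2009}. A lesser inconvenience is the logarithmic factor appearing at the endpoints of the three ranges, which is removed either by a sharper estimate of the relevant partial sums or by passing to slightly smaller closed intervals.
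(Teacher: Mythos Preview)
Your approach is essentially the same as the paper's: realise $V_m$ as a symmetric $n$-th order Wigner integral, apply the fourth moment bound of Theorem~\ref{maintheorem1}/Corollary~\ref{propquant1}, and estimate the resulting contraction norms by mimicking the commutative Breuer--Major computation of~\cite[Theorem~4.1]{nourdin_steins_2009}. The only differences are cosmetic---you use an abstract isonormal representation $X_k = I_1(e_k)$ instead of the explicit kernel $K_H$, and you are more careful than the paper about the normalisation issue (the paper applies Corollary~\ref{propquant1} directly to $g_{n,m,H}$, whose $L^2$-norm is $\sigma_m/\sigma$ rather than~$1$, whereas you track the extra term $(\norm{f}^2-1)^2$ explicitly).
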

\begin{proof}
It is well known (see e.g. \cite[Proposition 2.5]{nourdin_selected_2012}), that
the (non-free) fractional Brownian motion can be represented as a Wiener
integral with respect to a standard Brownian motion as
\begin{equation*}
B^{H}_t = \int_{0}^{t}K_H\left(t,u \right)dW_u,
\end{equation*}
where the kernel $K_H(\cdot, \cdot)$ is explicit (see e.g. \cite[Proposition
2.5]{nourdin_selected_2012}). Using the correspondence between Wiener and Wigner
integrals, it also holds that  
\begin{equation*}
S^{H}_t = \int_{0}^{t}K_H\left(t,u \right)dS_u.
\end{equation*}
Indeed, this can be verified by checking that the covariance function of the
above integral coincides with the one of the free fractional Brownian
motion. Then, denoting
\begin{equation*}
f_{k,m,H} =  m^H \left( K_H\left(\frac{k+1}{m} , \cdot \right)\mathds{1}_{\left[
      0,\frac{k+1}{m}\right] } - K_H\left(\frac{k}{m} , \cdot
  \right)\mathds{1}_{\left[ 0,\frac{k}{m}\right] }\right), 
\end{equation*}
it holds that
\begin{equation*}
V_m = \frac{1}{\sqrt{m}}\sum_{k=0}^{m-1}U_n\left(I_{1}\left(f_{k,m,H} \right)  \right)
\end{equation*}
Observe that $\norm{f_{k,m,H}}_{L^2\left( \mathbb{R}_{+}\right) } = 1$ so that
\begin{equation*}
V_m = \frac{1}{\sqrt{m}}\sum_{k=0}^{m-1}I_{n}\left(f_{k,m,H}^{\otimes n}\right)
= I_{n}\left(\frac{1}{\sqrt{m}}\sum_{k=0}^{m-1} f_{k,m,H}^{\otimes n}\right). 
\end{equation*}
Define
\begin{equation*}
g_{n,m,H} = \frac{1}{\sigma\sqrt{m}}\sum_{k=0}^{m-1} f_{k,m,H}^{\otimes n}.
\end{equation*}
Applying Corollary~\ref{propquant1} to $V_m$, we get 
\begin{align*}
d_{\mathcal{C}_2}\left( \frac{V_m}{\sigma},\mathcal{S}(0,1)\right) 
&\leq
\frac{\sqrt{C_n}}{2}\sqrt{\varphi\left( I_n\left(g_{n,m,H}\right)^4 \right) -2}
\\ &= \frac{\sqrt{C_n}}{2}\sqrt{\sum_{u=1}^{n-1}\norm{g_{n,m,H}  \conts{u} g_{n,m,H}
  }_{L^2\left( \R_{+}^{2n-2u}\right) }^{2}}. 
\end{align*} 
From here, one can evaluate and estimate the contraction norms similarly as in
the proof of~\cite[Theorem 4.1]{nourdin_steins_2009}. 
\end{proof}

\begin{acknow*}
The authors wish to thank Roland Speicher for providing the counterexample
appearing in Remark \ref{bigremark} and Tobias Mai and Roland Speicher for
several stimulating discussions. S. Campese was partially supported by ERC grant
277742 Pascal.  
\end{acknow*}

\bibliographystyle{alpha}
\bibliography{biblio}

\end{document}